\documentclass[10pt]{amsart}
\usepackage{amsmath, amssymb}
 \usepackage{mathrsfs}
\newcommand{\no}[1]{#1}
\renewcommand{\no}[1]{}
\no{\usepackage{times}\usepackage[subscriptcorrection, slantedGreek, nofontinfo]{mtpro}
\renewcommand{\Delta}{\upDelta}}
\usepackage{color}


 \setlength{\marginparwidth}{0.6in}
\date{\today}
\setlength{\oddsidemargin}{0.0in}
\setlength{\evensidemargin}{0.0in}
\setlength{\textwidth}{6.5in}
\setlength{\topmargin}{0.0in}
\setlength{\textheight}{8.5in}

\newtheorem{theorem}{Theorem}[section]
\newtheorem{proposition}{Proposition}[section]
\newtheorem{lemma}{Lemma}[section]

\newtheorem{corollary}{Corollary}[section]

\newtheorem{remark}{Remark}[section]

\numberwithin{equation}{section}


\title[Inverse medium problem]{New Stability  Estimates for the
 Inverse Medium Problem\\ with Internal Data}

\author[Mourad Choulli]{Mourad Choulli\dag}
\address{\dag IECL, UMR CNRS 7502, Universit\'e de Lorraine, Boulevard des Aiguillettes BP 70239 54506 Vandoeuvre Les Nancy cedex- Ile du Saulcy - 57 045 Metz Cedex 01 France}
\email{mourad.choulli@univ-lorraine.fr}

\author[Faouzi Triki]{Faouzi Triki\ddag}
\address{\ddag Laboratoire Jean Kuntzmann,  Universit\'e Joseph Fourier, BP 53, 38041 Grenoble
Cedex 9, France\ddag }
\email{faouzi.triki@imag.fr}

\date{\today}


\begin{document}

\begin{abstract}
A major problem in solving multi-waves inverse problems is the presence of critical points
where the collected data completely vanishes. The set of these
critical points depend on the choice of the boundary conditions, and
can be directly determined from the data itself. To our knowledge, in the most existing
stability results, the boundary conditions are assumed to be close to
a set of CGO solutions where the critical points can be avoided. We establish in the present work new weighted stability estimates for an
electro-acoustic inverse problem without  assumptions
on the presence of critical points. 
 These results show that the Lipschitz  stability 
 far from the critical points  deteriorates  near these points to a  
logarithmic stability.

\medskip
\noindent
{\bf Mathematics subject classification :} 35R30.

\smallskip
\noindent
{\bf Key words :} Multi-Wave Imaging, Critical points, Helmholtz
equation, Hybrid Inverse Problems, Electro-Acoustic, Internal Data.

\end{abstract}

\maketitle



\section{Introduction}
Recently, number of  works~\cite{ABGNS, ABGS, AGNS, ACGRT, BS, BU, SW} have  developped a mathematical framework  for new biomedical imaging modalities based on multi-wave probe of the medium. 
The objective is to stabilize  and improve the resolution of imaging of biological  tissues. 

\smallskip
Different kinds of waves propagate in biological tissues and carry on 
informations on its properties. Each one of them  is more sensitive to
a specific physical parameter  and can be used to provide an accurate 
image of it. For example, ultrasonic waves has influence on
the density, electric waves are sensitive to  the conductivity and optical
waves impact the optical absorption. Imaging modalities based on a single 
wave are known to be ill-posed and suffer from low
resolution~\cite{BT, SU}. Furthermore, the
stability estimates related to these modalities are mostly
logarithmic, i.e. an infinitesimal 
noise in the measured data may be exponentially 
amplified and may give rise to a large error in the 
computed solution~\cite{SU, BT}. One promising way to overcome 
the intrinsic limitation of single wave imaging and provide a stable 
and accurate reconstruction of  physical parameters of  a biological tissue is to
combine different wave-imaging modalities~\cite{BBMT, ACGRT, Ka, Ku}.

\smallskip
A variety of multi-wave imaging approaches are being
introduced and studied over the last decade.
The term multi-wave refers to the fact that two  types of physical
waves are used to probe the medium under study. Usually, the first wave
is sensitive to the contrast
of the desired parameter, 
the other types can carry the information revealed by the first type 
of waves to the boundary of the medium  where measurements can be
taken. There are different types of waves interaction that can be used 
to produce a single imaging system with best contrast and resolution
properties of the two  waves~\cite{FT}: 
the interaction of the first  wave with the tissue can generate a second
kind of wave ; a first wave that carries information about the
contrast of the desired parameter can be locally modulated by a second
wave that has better spatial resolution ; a fast propagating of the first 
wave can be used to acquire a spatio-temporal sequence of the
propagation of a slower transient wave. Typically, the inversion
procedure proceeds in two steps. Informations are retrieved
from the waves of the second type by measurement on the boundary
of the medium. The first inversion
 usually takes the form of a well-posed linear inverse 
source problem and provide internal
data for the waves of the first type that are sensitive to the
contrast of the desired physical parameter of the medium. The
second step consists in recovering the values of this parameter
from the acquired  internal data.

\smallskip
Here, we consider the peculiar case
of electric measurements under elastic perturbations, where 
the medium is probed with acoustic waves while making electric
boundary measurements~\cite{ABCTF, ACGRT, BS, BoT}. The associated internal data
consists in the pointwise value of the electric 
energy density. In this paper, like in~\cite{Tr, BK},
we focus on the second inversion, that is,
to reconstruct the desired physical parameter of the medium from
internal data. We refer the readers to the papers~\cite{ABCTF, ACGRT, BM} for the
modelization of the coupling between the 
acoustic and electric waves  and the way to get such internal
data. 
  
\smallskip
We introduce the mathematical framework of our inverse problem. On a bounded domain $\Omega$ of $\mathbb{R}^n$, $n=2,3$,  with boundary
$\Gamma$, we consider the  second order differential operator 
\[
L_q=\partial _i(a^{ij}\partial _j\, \cdot \,)+q.
\]

Following is a list of assumptions that may be used to derive
the main results in the next section. 

\medskip
\noindent
{\bf a1}. The matrix $(a^{ij}(x))$ is symmetric for any $x\in \Omega$.
 
\smallskip
\noindent
{\bf a2}. (Uniform ellipticity condition) There is a constant $\lambda >0$ so that
\[
a^{ij}(x)\xi _i\xi _j\geq \lambda |\xi |^2,\;\; x\in \Omega ,\; \xi \in \mathbb{R}^n.
\]

\smallskip
\noindent
{\bf a3}. The domain  $\Omega$ is of class $C^{1,1}$.

\smallskip
\noindent
{\bf a4}. $a^{ij}\in  W^{1,\infty} (\Omega )$, $1\leq i,j\leq n$.

\smallskip
\noindent
{\bf a5}. $g\in W^{2-\frac{1}{p}, p}(\Gamma )$ with $p>n$ and it is non identically equal to zero.

\medskip
We further define $\mathscr{Q}_0$ as the subset of $L^\infty(\Omega)$ consisting of functions $q$ such that $0$ is not an eigenvalue of the realization of
the operator $L_q$ on $L^2(\Omega )$ under the Dirichlet boundary
condition.

\smallskip
For $q\in \mathscr{Q}_0$, let $u_q\in H^1(\Omega )$ denotes the unique weak solution of the boundary value problem (abbreviated to BVP in the sequel)
\begin{equation}\label{e1}
\left\{
\begin{array}{llccc}
L_qu=0\;\; &\textrm{in}\;\; \Omega ,
\\
u=g &\textrm{on}\;\; \Gamma .
\end{array}
\right.
\end{equation}

We will see in Section \ref{section3} that in fact $u_q\in W^{2,p}(\Omega )$, for any $q\in  \mathscr{Q}_0$.

\smallskip
We assume in the present work that the matrix  $(a^{ij})$ is
known. We are then concerned with the inverse problem of reconstructing 
the coefficient $q=q(x)$ from the internal over-specified data
\begin{eqnarray}\label{intdata}
I_q(x)=q(x)u_q^2(x), \qquad x\in \Omega.
\end{eqnarray}

In~\cite{Tr},  the second author derived a Lipschitz  
stability estimate for the inverse problem above when 
the  $(a^{ij}(x))$ is the identity  matrix and
when the Dirichlet boundary
 condition  $g$ is  chosen in such a way that $u_q$ does not vanish over
 $\Omega$. It turns out that the
 Lipschitz constant is inversely proportional
to $\min_{\Omega}|u_q|$ and can be very large if this later is close
to zero. 

\smallskip
We aim to establish stability estimates
without such an assumption on the  Dirichlet boundary
 condition  $g$. Obviously, 
 when $u_q$ vanishes at a critical point
 $\hat x$, that is $ u_q(\hat x) = 0$, we expect to lose
informations on $q(x)$ in the surrounding area  of the point
 $\hat x$. Henceforth,  we predict that the stability
estimate in such
areas  will deteriorate.
How worse  it can be ? Can we derive stability estimates
that  reflect our intuition ?

\smallskip
In fact, the presence of critical points where the collected internal 
data completely vanishes is actually a major problem in solving
multi-waves inverse problems. The set of these critical points depend
on the choice of the boundary conditions, and can be directly
determined from the data itself~\cite{HN,Tr, BC}. 
 In most existing stability results, the boundary conditions are
 assumed to be close to a set of CGO solutions (we refer to ~\cite{SU} for more details on such solutions),
 where the citical
 points can be avoided~\cite{ACGRT, ABCTF, BU, BBMT, KS}. The goal of the present work is to avoid using such 
assumptions since they are not realistic in physical point of
view. 

\smallskip
 The rest of this text is structured as follows. The main results
are stated in Section \ref{section2}. The well-posedness and the
regularity of the solution to the direct problem
are provided in Section \ref{section3}. The uniqueness of the inverse
problem is proved in Section \ref{section4}. Weighted stability estimates for the electro-acoustic inverse problems without assumptions on the boundary conditions are given in Section \ref{section5}. Finally, we prove general logarithmic stability estimates  in Section \ref{section6}. 

 \section{Main results}\label{section2}
We state  the  main uniqueness and stability results 
of the paper. 

\begin{theorem}\label{t1}
{\rm (Uniqueness)} We assume that conditions {\bf a1}-{\bf a5} are satisfied. Let $q,\, \widetilde{q}\in \mathscr{Q}_0$ be such that
\[
\frac{\widetilde{q}}{q}\in C(\overline{\Omega})\; \; \mbox{and}\; \; \min_{\overline{\Omega}}\left|\frac{\widetilde{q}}{q}\right|>0,
\]
Then $I_q=I_{\widetilde{q}}$ implies $q=\widetilde{q}$.
\end{theorem}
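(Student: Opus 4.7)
Set $\rho := \widetilde{q}/q \in C(\overline{\Omega})$, bounded away from $0$ by hypothesis. Combining $\widetilde{q} = \rho q$ with the internal-data identity $qu_q^2 = \widetilde{q}u_{\widetilde{q}}^{\,2}$ yields $q(u_q^2 - \rho u_{\widetilde{q}}^{\,2}) = 0$ in $\Omega$, hence $u_q^2 = \rho u_{\widetilde{q}}^{\,2}$ on the set where $q$ does not vanish. Since $u_q, u_{\widetilde{q}} \in W^{2,p}(\Omega)\hookrightarrow C(\overline{\Omega})$ both equal $g$ on $\Gamma$, and $g\not\equiv 0$, the set $\Sigma := \{g\neq 0\}\cap\Gamma$ is open and nonempty; evaluating $\rho$ on a sequence approaching $x_0\in\Sigma$ along $\{q\neq 0\}$ and using the continuity of $\rho$ forces $\rho(x_0) = 1$. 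Consequently $\rho$ is strictly positive on the connected component of $\overline{\Omega}$ adjacent to $\Sigma$, so I may set $\sigma := \sqrt{\rho}\in C(\overline{\Omega})$, which satisfies $\sigma > 0$ and $\sigma\equiv 1$ on $\Sigma$.

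On the open set $V := \{u_{\widetilde{q}}\neq 0\}$, the relation $u_q = \sigma u_{\widetilde{q}}$ holds (positive branch fixed by the value of $\sigma$ on $\Sigma$), and this makes $\sigma \in W^{1,p}_{\mathrm{loc}}(V)$. Substituting $u_q = \sigma u_{\widetilde{q}}$ into $L_q u_q = 0$, expanding by the product rule, using the symmetry of $(a^{ij})$ to merge the two mixed gradient terms, and eliminating $\partial_i(a^{ij}\partial_j u_{\widetilde{q}})$ through $L_{\widetilde{q}} u_{\widetilde{q}} = 0$ together with $\widetilde{q} = \sigma^2 q$, one obtains after multiplication by $u_{\widetilde{q}}$ the weighted divergence-form equation
\[
\partial_i\bigl(u_{\widetilde{q}}^{\,2}\, a^{ij}\,\partial_j\sigma\bigr) + q\,\sigma\,(1-\sigma^2)\, u_{\widetilde{q}}^{\,2} = 0 \qquad\text{in } V,
\]
which extends distributionally to all of $\Omega$ because the weight $u_{\widetilde{q}}^{\,2}$ vanishes on $\Omega\setminus V$.

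The final step is to conclude $\sigma \equiv 1$. Testing the above equation against $\sigma - 1$ and integrating over $\Omega$, the boundary contribution from integration by parts vanishes on $\Sigma$ (where $\sigma - 1 = 0$) and on $\Gamma\setminus\Sigma$ (where $u_{\widetilde{q}}^{\,2} = g^2 = 0$); using $(1-\sigma^2)(\sigma - 1) = -(\sigma+1)(\sigma-1)^2$ this gives
\[
\int_\Omega u_{\widetilde{q}}^{\,2}\, a^{ij}\,\partial_i\sigma\,\partial_j\sigma\, dx + \int_\Omega q\,\sigma\,(\sigma+1)\,(\sigma-1)^2\, u_{\widetilde{q}}^{\,2}\, dx = 0.
\]
The first integral is nonnegative by ellipticity (\textbf{a2}). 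The main obstacle I anticipate is the indefinite sign of $q$ in the second integral, which blocks a direct energy conclusion. I would close the argument by rewriting $q\,\sigma\, u_{\widetilde{q}}^{\,2} = I_{\widetilde{q}}/\sigma$ so that the indefinite coefficient is absorbed into the known data $I_{\widetilde{q}}$, and controlling the resulting term via the weighted Poincaré-type inequality available on $\Omega$ with trace vanishing on $\Sigma$, together with the a priori bound $\sigma \leq \|\rho\|_\infty^{1/2}$. Once $\sigma \equiv 1$ is established, $u_q = u_{\widetilde{q}}$ on $V$, whence $q = \widetilde{q}$ on $V\cap\{q\neq 0\}$; density of $V$ in $\Omega$ (by unique continuation for $u_{\widetilde{q}}$) and the continuity of $\rho$ then extend the identity to $q = \widetilde{q}$ on $\overline{\Omega}$.
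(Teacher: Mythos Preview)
Your derivation of the weighted equation
\[
\partial_i\bigl(u_{\widetilde{q}}^{\,2}\, a^{ij}\,\partial_j\sigma\bigr) + q\,\sigma\,(1-\sigma^2)\, u_{\widetilde{q}}^{\,2} = 0
\]
is correct on $V$, and the energy identity you obtain after testing with $\sigma-1$ is the right one. But the step you flag as ``the main obstacle'' is exactly where the argument breaks, and your proposed remedy does not close it. Rewriting $q\sigma u_{\widetilde q}^{\,2}=I_{\widetilde q}/\sigma$ does nothing for the sign, since $I_{\widetilde q}=I_q=qu_q^2$ has the sign of $q$; and no weighted Poincar\'e inequality will let you absorb $\int (I_{\widetilde q}/\sigma)(\sigma+1)(\sigma-1)^2$ into the gradient term without a smallness assumption you do not have. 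There is also a genuine regularity issue: you only know $\sigma=\sqrt{\rho}\in C(\overline\Omega)$ globally and $\sigma\in W^{1,p}_{\mathrm{loc}}(V)$, so using $\sigma-1$ as a test function on all of $\Omega$ and integrating by parts requires justification you have not supplied.

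The paper avoids both difficulties by working with $v=u_q-u_{\widetilde q}$ instead of with $\sigma$. Once one has the pointwise relation $u_q=\sqrt{\widetilde q/q}\,u_{\widetilde q}$ (equivalently $\sqrt{q}\,u_q=\sqrt{\widetilde q}\,u_{\widetilde q}$ when $q\widetilde q\ge 0$, which follows from $\widetilde q/q>0$), subtracting the two PDEs gives
\[
-\partial_i(a^{ij}\partial_j v)+\sqrt{q\widetilde q}\,v=0\quad\text{in }\Omega,\qquad v=0\quad\text{on }\Gamma.
\]
Here the zero-order coefficient $\sqrt{q\widetilde q}=\lvert q\rvert\sqrt{\widetilde q/q}\ge 0$ regardless of the sign of $q$, the unknown $v$ is in $H_0^1(\Omega)$ with no weight, and the standard energy estimate immediately yields $v=0$. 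This algebraic trick---passing from the pair $(q,\widetilde q)$ to the single nonnegative coefficient $\sqrt{q\widetilde q}$---is precisely what your approach is missing.
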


Next, we define the set of unknown coefficient for which we
will prove a stability estimate. Fix $q_0>0$ and  $0<k<1$. Let 
 $q^\ast \in  \mathscr{Q}_0$ such
 that $0<2q_0\leq q^\ast$. 
 
 \smallskip
 Given $q\in \mathscr{Q}_0$, we let $A_q$ be the unbounded operator $A_qw=-L_qw$ with domain $D(A_q)=\{w\in H_0^1(\Omega );\; L_qw\in L^2(\Omega )\}$. The assumption that $0$ is not an eigenvalue of $A_q$ means that $A_q^{-1}:L^2(\Omega )\rightarrow L^2(\Omega )$ is bounded or, in other words, $0$ belongs to the resolvent set of $A_q$.
 
 \smallskip
 We define $\mathscr{Q}$ to be the set of those functions in $L^\infty (\Omega )$ satisfying
 \begin{equation}\label{eq1}
 \|q-q^\ast\|_{L^\infty (\Omega )}\leq \min 
\left(\frac{k}{\| A_{q^\ast}^{-1}\|_{\mathscr{B}(L^2(\Omega ))}}, q_0\right).
 \end{equation}
 We will see in Section \ref{section3} that $\mathscr{Q}\subset \mathscr{Q}_0$.
 
 \smallskip
  In the sequel $C$, $C_j$, where $j$ is an integer, are generic constants that can only depend  on
 $\Omega$, $(a^{ij})$, $n$,  $g$,   possibly on the a priori bounds we
 make on the unknown function $q$ and, eventually, upon a fixed parameter 
$\theta \in (0,\frac{1}{4})$ 
 
\begin{theorem}\label{t5}{\rm (Weighted stability)}
We assume that assumptions  {\bf a1}-{\bf a5} are fulfilled. Let $q$,
$\widetilde{q}\in \mathscr{Q} \cap W^{1,\infty}(\Omega )$
 satisfying $q-\tilde{q}\in H_0^1(\Omega )$ and
\[
\|q\|_{W^{1,\infty}(\Omega )},\, \|\widetilde q\|_{W^{1,\infty}(\Omega )} \leq M.
\]

Then
\begin{equation}\label{e26}
\left\|\sqrt{I_q} \left(q -\widetilde{q}\right)\right\|_{H^1(\Omega)} \leq 
C \left\| \sqrt{I_q} - \sqrt{I_{\widetilde{q}}} \right\|_{H^1(\Omega)}^{\frac{1}{2}},
\end{equation}

\end{theorem}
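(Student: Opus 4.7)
The plan rests on two elementary identities. Algebraically, expanding $qu_q^2 - \tilde q u_{\tilde q}^2$ yields
\[
(q - \tilde q)\, u_q^2 \;=\; (I_q - I_{\tilde q}) \,-\, \tilde q\,(u_q + u_{\tilde q})(u_q - u_{\tilde q}),
\]
while the nonlinear factorization
\[
I_q - I_{\tilde q} \;=\; \bigl(\sqrt{I_q} + \sqrt{I_{\tilde q}}\bigr)\bigl(\sqrt{I_q} - \sqrt{I_{\tilde q}}\bigr)
\]
is the ultimate source of the $1/2$-power loss on the right-hand side of the claimed estimate. Set $h := q - \tilde q$ and $v := u_q - u_{\tilde q}$, both in $H_0^1(\Omega)$. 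Subtracting $L_q u_q = L_{\tilde q}u_{\tilde q} = 0$ gives $L_q v = -\,h\,u_{\tilde q}$ with $v|_\Gamma = 0$. The hypothesis $q \in \mathscr{Q}$ provides, via the Neumann--series argument of Section~\ref{section3}, a uniform bound on $\|A_q^{-1}\|_{\mathscr{B}(L^2(\Omega))}$, and together with the $C^{1,1}$ regularity of $\Omega$ and $a^{ij}\in W^{1,\infty}(\Omega)$, classical elliptic regularity yields $\|v\|_{H^2(\Omega)} \leq C\,\|h\|_{L^2(\Omega)}$.

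The core of the argument is a weighted energy inequality. Differentiating the first identity in $x_i$, contracting with $a^{ij}\partial_j h$, and integrating over $\Omega$---using that $h\in H_0^1(\Omega)$ so all boundary terms vanish, and using the equations satisfied by $u_q$ and $u_{\tilde q}$ in the subsequent integrations by parts---one obtains an identity whose left-hand side coincides with the uniformly elliptic quadratic form $\int_\Omega a^{ij}\partial_i(u_q h)\partial_j(u_q h)$ (equivalently, $\int_\Omega u_q^2\,a^{ij}\partial_i h\,\partial_j h + \int_\Omega I_q\,h^2$), which dominates $\|u_q h\|_{H^1(\Omega)}^2$, and whose right-hand side involves $\partial_i(I_q - I_{\tilde q})$ and $\partial_i[\tilde q(u_q+u_{\tilde q})v]$. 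Applying the factorization and Cauchy--Schwarz converts the data term into a bound of the form $C\,\|\sqrt{I_q}-\sqrt{I_{\tilde q}}\|_{H^1(\Omega)}$ times a priori bounded quantities, while the $v$-contributions are estimated through the elliptic regularity bound on $v$ and absorbed into the left-hand side using the smallness of $\|q - q^*\|_{L^\infty}\leq k/\|A_{q^*}^{-1}\|$ with $k<1$.

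The conclusion then follows once one verifies that $\|u_q\,h\|_{H^1(\Omega)}$ controls $\|\sqrt{I_q}\,h\|_{H^1(\Omega)}$. This is elementary: the two functions have the same pointwise modulus, and although $\sqrt{I_q}\,h = \operatorname{sgn}(u_q)\,\sqrt{q}\,u_q\,h$ carries a sign factor that jumps across the critical set $\{u_q=0\}$, the jump produces no distributional contribution to the gradient because $\sqrt{q}\,u_q\,h$ vanishes precisely on that same set; combined with $\sqrt{q}\in W^{1,\infty}(\Omega)$ (using $q\geq q_0>0$), this gives the equivalence $\|\sqrt{I_q}\,h\|_{H^1(\Omega)}\sim\|u_q\,h\|_{H^1(\Omega)}$.

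The main obstacle is precisely that $\sqrt{I_q}=\sqrt{q}\,|u_q|$ is only Lipschitz across the critical set $\{u_q=0\}$ and not in $H^2(\Omega)$, so the naive interpolation $\|f\|_{H^1}\leq\|f\|_{L^2}^{1/2}\|f\|_{H^2}^{1/2}$ applied to $f=\sqrt{I_q}\,h$ is unavailable. The plan above circumvents this by placing the smooth quantity $u_q$ inside the energy identity and converting back to $\sqrt{I_q}$ only at the very end. A secondary difficulty is that the elliptic-regularity control on $v$ provides only a bound of the form $C\|h\|_{L^2(\Omega)}$, which is not itself small; it is precisely the a priori smallness of $\|q-q^*\|_{L^\infty}$ built into the set $\mathscr{Q}$ via the constant $k<1$ that makes the final absorption possible.
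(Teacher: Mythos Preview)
Your approach differs substantially from the paper's, and as written it contains a genuine gap.

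\textbf{The paper's route.} The authors do \emph{not} work with $h=q-\tilde q$ and $v=u_q-u_{\tilde q}$. Instead they set $\mu=1/\sqrt{q}$, $J=\sqrt{I_q}$, so that $|u_q|=J\mu$; the identity $|u_q|\,\partial_i(a^{ij}\partial_j|u_q|)=-qu_q^2$ becomes $J\,\partial_i(a^{ij}\partial_j(J\mu))=-J^2/\mu$. Subtracting the tilde version and testing with $\mu-\tilde\mu\in H_0^1(\Omega)$ produces, after one integration by parts,
\[
\int_\Omega a^{ij}\partial_i\bigl(J(\mu-\tilde\mu)\bigr)\partial_j\bigl(J(\mu-\tilde\mu)\bigr)+\int_\Omega\frac{J^2}{\mu\tilde\mu}(\mu-\tilde\mu)^2
\]
on the left, which is coercive for $\|J(\mu-\tilde\mu)\|_{H^1}$, while every term on the right is \emph{linear} in $J-\tilde J$. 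No absorption is needed, and the conversion from $\mu-\tilde\mu$ to $q-\tilde q$ is a harmless multiplication by a $W^{1,\infty}$ factor bounded above and below.

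\textbf{Where your argument breaks.} Your ``first identity'' $(q-\tilde q)u_q^2=(I_q-I_{\tilde q})-\tilde q(u_q+u_{\tilde q})(u_q-u_{\tilde q})$ is a pointwise tautology: it uses only the definitions $I_q=qu_q^2$, $I_{\tilde q}=\tilde qu_{\tilde q}^2$ and no PDE. Consequently, differentiating it and pairing with $a^{ij}\partial_jh$ yields an identity that is trivially $0=0$ once you unwind the $v$-term (indeed $\tilde q(u_q+u_{\tilde q})v=\tilde q(u_q^2-u_{\tilde q}^2)=-hu_q^2+(I_q-I_{\tilde q})$). The PDE has to enter \emph{before} you test, as in the paper's derivation for $|u_q|$; the phrase ``using the equations in the subsequent integrations by parts'' does not produce the claimed coercive form. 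A direct computation of $\int a^{ij}\partial_i(hu_q^2)\partial_jh$ gives $\int u_q^2a^{ij}\partial_ih\partial_jh+\int I_qh^2-\int h^2a^{ij}\partial_iu_q\partial_ju_q$, with a wrong-sign term you have not accounted for.

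\textbf{The absorption step also fails.} Even granting the energy identity you describe, your $v$-contributions are of size $\|v\|_{H^1}\|\nabla h\|_{L^2}$ (or similar), i.e.\ they involve \emph{unweighted} norms of $h$, whereas the coercive left side only controls the \emph{weighted} quantity $\|u_qh\|_{H^1}$. Since the whole point of the theorem is that $u_q$ may vanish, there is no inequality $\|h\|\lesssim\|u_qh\|$ available, and the gap cannot be closed by the structural condition $k<1$: that constant is fixed, not small, and the Neumann-series bound on $A_q^{-1}$ gives $\|v\|\lesssim\|hu_{\tilde q}\|_{L^2}$, which is again weighted by the wrong function. The paper's change of unknown to $\mu=1/\sqrt{q}$ is precisely what makes the data $J$ appear as a multiplicative weight throughout and eliminates any need for such an absorption.
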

The result above shows that a Lipschitz  stability holds far from  
the set of critical points where the solution vanishes.

\smallskip
Next, we  derive   general stability estimates without weight and which hold 
everywhere including in the vicinity of the critical points where 
$u_q$ vanishes. The goal is to get rid of the weight 
 $\sqrt{I_q}$ in the stability~\eqref{e26}. The problem is reduced to solve
the following linear inverse problem:  reconstruct $f(x) \in L^\infty(\Omega)$ 
from the knowledge of $ |u_q(x)| f(x), \, x\in \Omega$. 
Obviously, if $u_q$ does not vanish over $\Omega$, the multiplication operator $M_{|u_q|}$
is invertible, and  as in~\cite{Tr},  a Lipschitz stability estimate  will hold.  Here,  the 
zero set of $u_q$ can  be not empty.  The main idea to overcome this
difficulty consists in quantifying the unique continuation property of the operator $L_q$, that
 is, $u_q$ can not vanish within a not empty open set (see for
 instance Lemmata~\eqref{l3} and~\eqref{le1}).

\smallskip
Prior to state our general stability estimate, we need to make additional assumptions:
 
 \medskip
 \noindent
 {\bf a6}. There exists a positive integer $m$ such that any two disjoints points of $\Omega$ can be connected by a broken line consisting of at most $m$ segments.
 
 \smallskip
\noindent
{\bf a7}. $|g|>0$ on $\Gamma$.

\begin{remark}\label{rm2.1}
{\rm
It is worthwhile to mention that assumption {\bf a6} is equivalent to say $\Omega$ is multiply-starshaped. The notion of multiply-starshaped domain was introduced in \cite{CJ} and it is defined as follows: the open subset $D$ is said mutiply-starshaped if there exists a finite number of points in $D$, say $x_1,\ldots ,x_k$, so that 
\\
(i) $\cup_{i=1}^{k-1}[x_i,x_{i+1}]\subset D$,
\\
(ii) any point in $D$ can be connected by a line segment to at least one of the points $x_i$. 
\\
In this case, any two points in $D$ can be connected by a broken line consisting of at most $k+1$ line segments. Obviously, the case $k=1$ corresponds to the usual definition of a starshaped domain. 
}
\end{remark}

 Henceforth, $\phi $ is a generic function of the form  
 \[
 \phi (s)=C_0\left[|\ln C_1|\ln (s)||^{-1}+s\right],\;\; s>0.
 \]

 \begin{theorem}\label{t2}{\rm (General stability)}
 We assume that conditions {\bf a1}-{\bf a7} hold. We fix $0<\theta
 <\frac{1}{4}$ and $M>0$. For any $q$, $\widetilde{q}\in \mathscr{Q} \cap W^{1,\infty}(\Omega )$
 satisfying $q-\tilde{q}\in H_0^1(\Omega )$ and
\[
\|q\|_{W^{1,\infty}(\Omega )},\, \|\widetilde q\|_{W^{1,\infty}(\Omega )} \leq M.
\] it holds that
 \[
 \|q-\widetilde{q}\|_{L^\infty (\Omega )}\leq  \phi \left(\left\|\sqrt{I_q}-\sqrt{I_{\widetilde{q}}}\right\|_{H^1(\Omega )}^{\theta}\right).
 \]
 \end{theorem}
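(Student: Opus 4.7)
The plan is to reduce Theorem~\ref{t2} to the weighted estimate of Theorem~\ref{t5} by stripping the weight $\sqrt{I_q}$ in two stages: an interpolation that upgrades an $L^2$-control of $u_q(q-\widetilde q)$ to an $L^\infty$-control, and a quantitative unique continuation that converts this bound on the product into a pointwise bound on $q-\widetilde q$. Set $v:=q-\widetilde q\in H_0^1(\Omega)\cap W^{1,\infty}(\Omega)$ and $\eta:=\|\sqrt{I_q}-\sqrt{I_{\widetilde q}}\|_{H^1(\Omega)}$. The hypothesis $q\in\mathscr{Q}$ gives $q\geq q_0>0$, so $\sqrt{I_q}\geq\sqrt{q_0}\,|u_q|$ pointwise, and Theorem~\ref{t5} yields in particular $\|u_q v\|_{L^2(\Omega)}\leq C\eta^{1/2}$.

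Next I would promote this $L^2$-bound to an $L^\infty$-bound on $w:=u_q v$. Since $u_q\in W^{2,p}(\Omega)\hookrightarrow W^{1,\infty}(\Omega)$ for $p>n$ and $\|v\|_{W^{1,\infty}}\leq 2M$, the product $w$ belongs to $W^{1,\infty}(\Omega)$ with norm depending only on $M$ and the a priori data. The elementary Lipschitz--$L^2$ interpolation
\[
\|w\|_{L^\infty(\Omega)}\leq C\|w\|_{W^{1,\infty}(\Omega)}^{n/(n+2)}\|w\|_{L^2(\Omega)}^{2/(n+2)}
\]
then yields $\|u_q v\|_{L^\infty(\Omega)}\leq C\eta^{1/(n+2)}$, which for $\eta$ not too large majorizes $C\eta^{2\theta}$ over the range $\theta\in(0,1/4)$ and $n\in\{2,3\}$ permitted by the statement.

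The crux of the proof, and the main obstacle, is to convert this $L^\infty$-bound on $u_q v$ into a pointwise bound on $v$. Here I would appeal to a quantitative unique continuation result (Lemmata~\ref{l3} and~\ref{le1} of the paper). Assumption {\bf a7}, together with the continuity of $u_q\in C^{0,\alpha}(\overline{\Omega})$, ensures $|u_q|\geq c_0>0$ on a tubular neighborhood of $\Gamma$; iterated three-sphere or doubling inequalities along the finite chains of segments permitted by assumption {\bf a6} then propagate this lower bound into the interior. The outcome is a function $\psi(r)$ such that, for every $x_0\in\Omega$ and every sufficiently small $r>0$, there exists $y\in B_r(x_0)$ with $|u_q(y)|\geq\psi(r)$; the $m$-fold compounding of three-sphere estimates degrades $\psi$ to a decay so slow that the final optimization below produces precisely the double-logarithmic rate encoded in $\phi$. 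Making $\psi$ quantitatively explicit and uniform over $\mathscr{Q}\cap W^{1,\infty}$, and avoiding any loss due to the possible presence of critical points of $u_q$ in the interior, is the most delicate technical step.

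Given such a $\psi$, for each $x_0\in\Omega$ and admissible $r>0$ pick $y\in B_r(x_0)$ with $|u_q(y)|\geq\psi(r)$, so that
\[
|v(y)|\leq\frac{\|u_q v\|_{L^\infty(\Omega)}}{|u_q(y)|}\leq\frac{C\eta^{2\theta}}{\psi(r)}.
\]
The Lipschitz continuity of $v$ (with constant $\leq 2M$) then gives $|v(x_0)|\leq C\eta^{2\theta}/\psi(r)+2Mr$, and balancing the two terms in $r$ yields an estimate of the form $|v(x_0)|\leq C|\ln(C_1|\ln\eta^\theta|)|^{-1}$, matching the first term of $\phi(\eta^\theta)$. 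The trailing $s$-term in the definition of $\phi$ absorbs the regime in which $\eta$ is too large for this optimization to be effective, and taking the supremum over $x_0$ completes the argument.
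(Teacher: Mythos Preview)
Your three–stage strategy (weighted estimate $\to$ interpolation to $L^\infty$ $\to$ quantitative unique continuation along broken segments) is exactly the paper's route: the paper obtains $\|U-\widetilde U\|_{H^1}\le C\eta^{1/2}$ from Theorem~\ref{t5}, interpolates to $\|U-\widetilde U\|_{L^\infty}\le C\eta^{\theta}$, uses the identity $(q-\widetilde q)u_q^2=(I-\widetilde I)+\widetilde q(U-\widetilde U)$ to get $\|(q-\widetilde q)u_q^2\|_{L^\infty}\le C\eta^{\theta}$, and then invokes Proposition~\ref{p2} (whose proof is precisely your steps~4--6, built on Corollary~\ref{c1} and Lemma~\ref{l4}). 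The main technical variations are: (i) you weight by $u_q$ rather than $u_q^2$, which forces you to re-run Proposition~\ref{p2} with $u_q$ in place of $u_q^2$ (harmless); and (ii) the paper interpolates between $H^1$ and $H^2$ (using the uniform $W^{2,p}$-bound \eqref{3.1}), which produces the exponent $\theta$ of the statement directly, whereas your $L^2$--$W^{1,\infty}$ Gagliardo--Nirenberg step yields the fixed exponent $1/(n+2)$.

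The one inaccuracy is your claim that $\eta^{1/(n+2)}$ ``majorizes $C\eta^{2\theta}$'' for every $\theta\in(0,\tfrac14)$: this inequality requires $\theta\le \tfrac{1}{2(n+2)}$, i.e.\ $\theta\le \tfrac18$ when $n=2$ and $\theta\le \tfrac{1}{10}$ when $n=3$, so it does not cover the full range in the statement. This is not fatal, however, because the modulus $\phi(s)=C_0\bigl[\,|\ln C_1|\ln s||^{-1}+s\,\bigr]$ is double-logarithmic in its leading term: for small $\eta$ one has $\phi(\eta^{1/(n+2)})\le \widetilde\phi(\eta^{\theta})$ with suitably modified generic constants $C_0,C_1$ (the $|\ln C_1|\ln(\cdot)||^{-1}$ term is essentially insensitive to positive powers, and the linear term is dominated by it), while for $\eta$ bounded away from $0$ the a~priori bound $\|q-\widetilde q\|_{L^\infty}\le 2M$ handles the rest. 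So your argument does close, but you should either restrict to $\theta\le 1/(2(n+2))$ and then argue as above, or adopt the paper's $H^1$--$H^2$ interpolation, which is cleaner and gives the stated exponent without this extra absorption step.
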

 
 \smallskip
We still have a weak version of the stability estimate above even if we do not assume that condition {\bf a7} holds. Namely, we prove

 \begin{theorem}\label{t2bis}{\rm (General weak stability)}
Let assumptions {\bf a1}-{\bf a6} hold.  We fix $0<\theta
<\frac{1}{4}$, $M>0$ and $\omega$ an open neighborhood of $\Gamma$ in
$\overline{\Omega}$. For all $q$, $\widetilde{q}\in \mathscr{Q} \cap W^{1,\infty}(\Omega )$
 satisfying $q=\widetilde{q}$ on $\omega\cup \Gamma$  and
\[
\|q\|_{W^{1,\infty}(\Omega )},\, \|\widetilde q\|_{W^{1,\infty}(\Omega )} \leq M.
\]
 it holds that
 \[
 \|q-\widetilde{q}\|_{L^\infty (\Omega )}\leq  \phi \left(\left\|\sqrt{I_q}-\sqrt{I_{\widetilde{q}}}\right\|_{H^1(\Omega )}^{\theta}\right).
 \]
 \end{theorem}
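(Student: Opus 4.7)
The plan is to adapt the proof of Theorem \ref{t2}, replacing the role played there by assumption \textbf{a7} (which forced $|u_q|$ to be bounded below near $\Gamma$) by the new hypothesis that $q=\widetilde q$ on the whole neighborhood $\omega$ of $\Gamma$. First I would observe that since $g\in W^{2-1/p,p}(\Gamma)$ with $p>n$ is not identically zero, $g$ is continuous on $\Gamma$ and there exist $x_0\in\Gamma$ and a constant $c_0>0$ such that $|g|\geq 2c_0$ on some open subset $\Gamma_0\subset\Gamma$ containing $x_0$. Because $u_q\in W^{2,p}(\Omega)$ (Section \ref{section3}) is continuous on $\overline\Omega$, one extracts an open ball $B\Subset\omega\cap\Omega$ on which $|u_q|\geq c_0$. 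This fixed ball will serve as the starting point of the quantitative unique continuation, replacing the boundary strip used under \textbf{a7}.

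Next, since $q=\widetilde q$ on $\omega$, the difference $q-\widetilde q$ already vanishes on $\omega\supset B$, so one only needs to control it on the compact set $K=\overline\Omega\setminus\omega$. Fix $x\in K$; by assumption \textbf{a6}, $x$ can be joined to the center of $B$ by a broken line consisting of at most $m$ segments lying in $\Omega$. Iterating a three-balls inequality for the elliptic operator $L_q$ along a chain of balls covering this broken line (essentially Lemmata \ref{l3} and \ref{le1}), one propagates the lower bound $|u_q|\geq c_0$ on $B$ to a uniform, quantitatively small lower bound for $|u_q(x)|$ on $K$. The number of iterations being bounded by a quantity depending only on $m$ and $\Omega$, the final bound is of the logarithmic type encoded in $\phi$. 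Combined with Theorem \ref{t5} and the fact that $\sqrt{I_q}\geq\sqrt{q_0}\,|u_q|$ (since $q\geq q_0>0$ on $\mathscr Q$), this yields an estimate of the form
\[
\|q-\widetilde q\|_{L^2(K)}\leq \phi\!\left(\|\sqrt{I_q}-\sqrt{I_{\widetilde q}}\|_{H^1(\Omega)}^{1/2}\right).
\]

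The last step is to upgrade this $L^2$-bound to an $L^\infty$-bound. Since $q-\widetilde q$ extends by zero across $\omega$ and is \emph{a priori} bounded in $W^{1,\infty}(\Omega)$ by $2M$, a Gagliardo--Nirenberg interpolation between $L^2(\Omega)$ and $W^{1,\infty}(\Omega)$, with a suitable choice of exponent absorbed into the parameter $\theta\in(0,1/4)$ and the constants $C_0,C_1$ defining $\phi$, gives the desired bound on $\|q-\widetilde q\|_{L^\infty(\Omega)}$ of exactly the same form as in Theorem \ref{t2}.

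The main obstacle I anticipate is the quantitative unique continuation step: without \textbf{a7} one cannot start the propagation from the full boundary, only from the single ball $B$ provided by \textbf{a5}. Controlling the double-exponential loss incurred when iterating the three-balls inequality along the broken line of \textbf{a6}, uniformly for $q$ ranging in $\mathscr Q\cap W^{1,\infty}(\Omega)$, and checking that the resulting modulus is still dominated by the generic function $\phi$, is the delicate point. Once this uniform propagation is established, everything else proceeds as in the proof of Theorem \ref{t2}.
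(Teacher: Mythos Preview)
Your overall architecture is sound: start the propagation from a ball near a boundary point where $g\neq 0$, use \textbf{a6} to reach any point of the compact set $K=\overline{\Omega}\setminus\omega$, and exploit that $q-\widetilde q$ vanishes on $\omega$. This is exactly the role of Lemma~\ref{le1} in the paper. However, the central step as you describe it does not work.

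The three-balls inequality does \emph{not} propagate the pointwise bound $|u_q|\geq c_0$ on $B$ to a pointwise lower bound for $|u_q(x)|$ on $K$. What it propagates is a lower bound for $\|u_q\|_{L^2(B(x,r))}$ (or $H^1$), of the form $e^{-ce^{c/r}}$, for small balls centered at $x\in K$; see Lemma~\ref{le1} and Corollary~\ref{c1}. A pointwise lower bound is simply false in general: the whole difficulty addressed in this paper is precisely that $u_q$ may vanish on a nontrivial set inside $\Omega$. Consequently your passage ``divide the weighted $L^2$ estimate by the pointwise lower bound for $|u_q|$'' breaks down, and the claimed inequality $\|q-\widetilde q\|_{L^2(K)}\leq \phi(\ldots)$ is not established by your argument.

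The paper removes the weight differently. It first follows the proof of Theorem~\ref{t2} verbatim to obtain the pointwise-weighted bound
\[
\big\|(q-\widetilde q)\,u_q^2\big\|_{L^\infty(\Omega)}\leq C\,\big\|\sqrt{I_q}-\sqrt{I_{\widetilde q}}\big\|_{H^1(\Omega)}^{\theta},
\]
via Theorem~\ref{t5}, the identity $(q-\widetilde q)u_q^2=I_q-I_{\widetilde q}+\widetilde q(U-\widetilde U)$, and the $H^1$--$H^2$ interpolation (this already contains the passage to $L^\infty$, so no Gagliardo--Nirenberg step is needed at the end). Then it applies Proposition~\ref{pr1} with $f=q-\widetilde q\in C^{0,1}(\overline\Omega)$: for each $x\in K$ one either has $u_q(x)^2\geq\delta$, in which case $|f(x)|\leq \delta^{-1}\|fu_q^2\|_{L^\infty}$, or $u_q(x)^2<\delta$, in which case the $L^2$-ball lower bound forces the existence of a nearby point $y$ with $u_q(y)^2\geq\delta$ and one uses the H\"older continuity of $f$ to compare $f(x)$ with $f(y)$. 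Optimizing in $\delta$ produces the logarithmic modulus $\phi$. Since $q-\widetilde q$ vanishes on $\omega$, $\|q-\widetilde q\|_{L^\infty(\Omega)}=\|q-\widetilde q\|_{L^\infty(K)}$ and the proof is complete. In short: replace your ``pointwise lower bound on $|u_q|$ + divide'' step by the $L^2$-ball lower bound of Lemma~\ref{le1} combined with the H\"older-continuity argument of Proposition~\ref{pr1}.
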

The general stability estimates indicate that the inverse problem of
recovering the coefficient $q$ in the vicinity of the
critical points is severely ill-posed.   The key point in proving our general stability results consists in a estimating the local behaviour of the solution of the BVP \eqref{e1} in terms of $L^2$ norm. To do so, we adapt the method introduced  in \cite{BCJ} to derive a stability estimate for the problem of recovering the surface impedance of an obstacle from the the far field pattern. This method was also used in \cite{CJ} to establish a stability estimate for the problem detecting a corrosion from a boundary measurement.
 
 
 \section{$W^{2,p}$-regularity of the solution of the BVP}\label{section3}
 
 In all of this section, we assume that assumptions {\bf a1}-{\bf a5} are fulfilled.

\begin{theorem}\label{t3}
i) Let $q\in \mathscr{Q}_0$. Then $u_q$, the unique weak solution of the BVP \eqref{e1}, belongs to $W^{2,p}(\Omega )$.
\\
ii) We have the following a priori bound
\begin{equation}\label{3.1}
\| u_q\|_{W^{2,p}(\Omega )}\leq C,\;\; q\in \mathscr{Q}.
\end{equation}
\end{theorem}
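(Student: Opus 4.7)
The plan is to reduce to a problem with homogeneous Dirichlet data, invert the operator $A_q$ on $L^2(\Omega)$, and then upgrade the regularity to $W^{2,p}$ through classical elliptic regularity. Using the $C^{1,1}$ regularity of $\Omega$ (assumption \textbf{a3}) and the trace regularity of $g$ (assumption \textbf{a5}), choose a lifting $G\in W^{2,p}(\Omega)$ with $G_{|\Gamma}=g$ and $\|G\|_{W^{2,p}(\Omega)}\leq C\|g\|_{W^{2-1/p,p}(\Gamma)}$. Setting $v:=u_q-G\in H_0^1(\Omega)$, the weak formulation of \eqref{e1} becomes
\[
A_q v = F:=L_q G \quad \text{in } L^2(\Omega),
\]
where $F=\partial_i(a^{ij}\partial_j G)+qG\in L^p(\Omega)$ since $a^{ij}\in W^{1,\infty}$, $G\in W^{2,p}$ and $q\in L^\infty$.

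For part (i), the assumption $q\in\mathscr{Q}_0$ makes $A_q^{-1}$ bounded on $L^2$, so $v=A_q^{-1}F\in H_0^1(\Omega)$. Rewriting the equation in non-divergence form and invoking classical $W^{2,p}$ regularity for strong solutions of linear elliptic equations with $W^{1,\infty}$ leading coefficients on $C^{1,1}$ domains (Agmon-Douglis-Nirenberg, or Gilbarg-Trudinger Theorem~9.15 bootstrapped from the available $H^1$ regularity), one obtains $v\in W^{2,p}(\Omega)$ and hence $u_q\in W^{2,p}(\Omega)$.

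For part (ii), the first task is to verify the announced inclusion $\mathscr{Q}\subset\mathscr{Q}_0$ with a uniform control on $\|A_q^{-1}\|$. Writing $A_q=A_{q^*}-M_{q-q^*}$, where $M_h$ denotes multiplication by $h\in L^\infty$ (bounded on $L^2$ with norm $\|h\|_{L^\infty}$), and noting that $D(A_q)=D(A_{q^*})$, one factors
\[
A_q=A_{q^*}\bigl(I-A_{q^*}^{-1}M_{q-q^*}\bigr).
\]
By the definition of $\mathscr{Q}$, $\|A_{q^*}^{-1}M_{q-q^*}\|_{\mathscr{B}(L^2)}\leq \|A_{q^*}^{-1}\|\,\|q-q^*\|_{L^\infty}\leq k<1$, so a Neumann series argument shows that $A_q$ is invertible with $\|A_q^{-1}\|_{\mathscr{B}(L^2)}\leq \|A_{q^*}^{-1}\|_{\mathscr{B}(L^2)}/(1-k)$. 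Combined with $\|F\|_{L^2(\Omega)}\leq C$ (uniform since $\|q\|_{L^\infty}\leq \|q^*\|_{L^\infty}+q_0$ on $\mathscr{Q}$), this yields $\|v\|_{L^2(\Omega)}\leq C$ uniformly for $q\in\mathscr{Q}$.

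It remains to propagate this $L^2$ bound to $W^{2,p}$. The a priori estimate
\[
\|v\|_{W^{2,p}(\Omega)}\leq C\bigl(\|L_q v\|_{L^p(\Omega)}+\|v\|_{L^p(\Omega)}\bigr)
\]
has a constant depending on $q$ only through $\|q\|_{L^\infty}$, and so is uniform on $\mathscr{Q}$. Here $\|L_q v\|_{L^p}=\|F\|_{L^p}\leq C$, while $\|v\|_{L^p}$ is controlled starting from $\|v\|_{L^2}\leq C$ via a short Sobolev-elliptic bootstrap that exploits the dimension restriction $n\in\{2,3\}$ (so that $H^1\hookrightarrow L^r$ for some $r>2$, and iterating gains two derivatives per step). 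The main delicacy throughout is to track carefully that every elliptic regularity constant depends on $q$ only via $\|q\|_{L^\infty}$; once this is confirmed, the Neumann series factorization in Step~3 both yields $\mathscr{Q}\subset\mathscr{Q}_0$ and provides the seed $L^2$-bound whose propagation closes the uniform $W^{2,p}$ estimate \eqref{3.1}.
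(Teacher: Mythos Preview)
Your proposal is correct and follows essentially the same route as the paper: the lifting $G\in W^{2,p}(\Omega)$, the identification $v=A_q^{-1}F$, the Neumann-series factorization $A_q=A_{q^\ast}(I-A_{q^\ast}^{-1}M_{q-q^\ast})$ to get both $\mathscr{Q}\subset\mathscr{Q}_0$ and a uniform $L^2$ bound, and then an elliptic bootstrap to $W^{2,p}$. The only cosmetic difference is that the paper organizes the bootstrap as a single clean step $L^2\to H^2\hookrightarrow L^\infty$ (using $n\le 3$) followed by one application of the $L^p$ estimate of Gilbarg--Trudinger, whereas you phrase it as an iteration through intermediate $L^r$ spaces; both reach the same conclusion with the same dependence on $\|q\|_{L^\infty}$.
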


\begin{proof}
i) Let $q\in \mathscr{Q}_0$. We pick $G\in W^{2,p}(\Omega )$ such that $G=g$ on $\Gamma$ and we set
$f_q=L_qG$. From our assumptions on $a^{ij}$, $f_q\in L^p (\Omega )$ and
therefore $v_q=u_q-G$ is the weak solution of the following BVP
\[ 
\left\{
\begin{array}{ll}
-L_qv= f_q\;\; &\textrm{in}\;\; \Omega ,
\\
v =0 &\textrm{on}\;\; \Gamma .
\end{array}
\right.
\]
That is $v_q=A_q^{-1}f_q$. In light of the fact that $W^{2,p}(\Omega )$ is continuously embedded in $H^2(\Omega )$, we get from the classical $H^2$-regularity theorem (e.g. \cite{RR}[Theorem 8.53, page 326]), that $u_q=G+A_q^{-1}f_q\in H^2(\Omega )$.

\smallskip
But, $H^2(\Omega )$ is continuously embedded in $L^\infty (\Omega )$ when $n=2$ or $n=3$. Therefore, 
\[
-\partial _i(a^{ij}\partial _ju_q)=qu_q\in L^\infty (\Omega ).
\]
Hence, $u_q \in W^{2,p}(\Omega )$ by \cite{GT}[Theorem 9.15, page 241].

\smallskip
ii) Let $q\in \mathscr{Q}$. If $M_{q^\ast-q}$ is the multiplier by $q^\ast-q$, acting on $L^2(\Omega )$, then $u_q-u_{q^\ast}\in D(A_q)$ and
\begin{equation}\label{3.2}
A_q(u_q-u_{q^\ast})= M_{q^\ast-q}u_{q^\ast}.
\end{equation}

On the other hand, we have
\[
A_q=A_{q^\ast}\left( I+A_{q^\ast}^{-1}M_{q^\ast-q}\right).
\]
Since by our assumption
\[
\|A_{q^\ast}^{-1}\|_{\mathscr{B}(L^2(\Omega ))}\|q-q^\ast\|_{L^\infty (\Omega )}\leq k<1,
\]
the operator $I+A_{q^\ast }^{-1}M_{q^\ast -q }$ is an isomorphism on $L^2(\Omega )$ and
\[
\| \left(I+A_{q^\ast }^{-1}M_{q^\ast -q}\right)^{-1} \|_{\mathscr{B}(L^2(\Omega ))}\leq \frac{1}{1-k}.
\]

Hence, $A_q^{-1}$ is bounded and
\[
\|A_q^{-1}\|_{\mathscr{B}(L^2(\Omega ))}=\|\left(I+A_{q^\ast }^{-1}M_{q^\ast -q}\right)^{-1}A_{q^\ast}^{-1}\|_{\mathscr{B}(L^2(\Omega ))}
\leq \frac{ \|A_{q^\ast}^{-1}\|_{\mathscr{B}(L^2(\Omega ))}}{1-k}.
\]
In light of \eqref{3.2}, we get
\[
\|u_q\|_{L^2(\Omega )}\leq \|u_{q^\ast}\|_{L^2(\Omega )}+\|u_q-u_{q^\ast}\|_{L^2(\Omega )}\leq \|u_{q^\ast}\|_{L^2(\Omega )}\left(1+\frac{k}{k-1}\right).
\]
In combination with \cite{RR}[(8.190), page 326], this estimate implies $\|u_q\|_{H^2(\Omega )}\leq C$ and therefore, bearing in mind that $H^2(\Omega )$ is continuously embedded in $L^\infty (\Omega )$, \[\|u_q\|_{L^\infty(\Omega )}\leq C\]

\smallskip
Finally, from \cite{GT}[Lemma 9.17, page  242], we
find \[\|u_q\|_{W^{2,p}(\Omega )}\leq C_1\|qu_q\|_{L^p (\Omega
  )}+\|L_0 G\|_{L^p(\Omega)} \leq C.\]
\end{proof}

Taking into account that $W^{2,p}(\Omega)$ is continuously embedded into $C^{1,\beta}(\overline{\Omega})$ with $\beta =1-\frac{n}{p}$, we obtain as straightforward consequence of estimate \eqref{3.1} the following corollary:
\begin{corollary}
\begin{equation}\label{3.3}
\| u_q\|_{C^{1,\beta}(\overline{\Omega})}\leq C,\;\; q\in \mathscr{Q}.
\end{equation}
\end{corollary}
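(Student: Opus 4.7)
The plan is to obtain this estimate as an immediate consequence of the uniform $W^{2,p}$ bound \eqref{3.1} established in Theorem \ref{t3}(ii), combined with the Sobolev embedding that the sentence preceding the corollary already flags. Specifically, I would invoke the continuous injection
\[
W^{2,p}(\Omega) \hookrightarrow C^{1,\beta}(\overline{\Omega}),\quad \beta = 1 - \tfrac{n}{p},
\]
and conclude by composing the embedding constant with the constant $C$ appearing in \eqref{3.1}.

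The only points one has to check are that the hypotheses of the Sobolev embedding are in force. Assumption \textbf{a5} provides $p>n$, which guarantees both that $\beta \in (0,1)$ is well defined and that $W^{2,p}$ functions have H\"older continuous gradients. Assumption \textbf{a3}, that $\Omega$ is of class $C^{1,1}$, provides more than enough boundary regularity (Lipschitz would suffice) for the embedding to hold up to $\overline{\Omega}$ rather than only in the interior. With those verified, the embedding is standard (see, e.g., Gilbarg--Trudinger or Adams).

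There is essentially no obstacle here: the statement is a routine application of a classical embedding theorem to a bound already obtained. The dependence of the final constant $C$ in \eqref{3.3} is inherited from \eqref{3.1}, namely on $\Omega$, $(a^{ij})$, $n$, $g$, and the a priori parameters $q_0$, $k$, $q^\ast$ defining $\mathscr{Q}$, together with the embedding constant, which itself depends only on $\Omega$, $n$, and $p$; since all of these have already been absorbed into the generic constant convention adopted in Section~\ref{section2}, the estimate is written simply with the constant $C$.
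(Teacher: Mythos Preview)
Your proposal is correct and follows exactly the paper's own reasoning: the corollary is stated as an immediate consequence of the continuous embedding $W^{2,p}(\Omega)\hookrightarrow C^{1,\beta}(\overline{\Omega})$, $\beta=1-\tfrac{n}{p}$, applied to the uniform bound \eqref{3.1}. There is nothing to add.
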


\begin{remark}\label{remark3.1}
{\rm
Estimate \eqref{3.3} is crucial in establishing our general uniform stability estimates. Specifically, estimate \eqref{3.3} is necessary to get that $\phi$ in Theorems \ref{t2} and \ref{t2bis} doesn't depend on $q$ and $\widetilde{q}$. Also, the $C^{1,\beta}$-regularity of $u_q$ guaranties that $|u_q|$ is Lipschitz continuous which is a key point in the proof of the weighted stability estimate.
}
\end{remark}


\section{Uniqueness}\label{section4}

In this section, even if it is not necessary, we assume for simplicity that conditions {\bf a1}-{\bf a5} hold true.

\smallskip
The following Caccioppoli's inequality will be useful in the sequel. These kind of inequalities are well known (e.g. \cite{Mo}), but for sake of completeness we give its (short) proof. 

\begin{lemma}\label{l1}
Let $q\in \mathscr{Q}_0$ satisfying $\|q\|_{L^\infty (\Omega)}\leq \Lambda$, for a given $\Lambda >0$. Then there exists a constant $\widehat{C}=\widehat{C}(\Omega ,(a^{ij}),\Lambda )>0$ such that, for any $x\in \Omega$ and $0<r<\frac{1}{2}\mbox{dist}(x,\Gamma )$,
\begin{equation}\label{e4}
\int_{B(x,r)}|\nabla u_q|^2dy\leq \frac{\widehat{C}}{r^2}\int_{B(x,2r)}u_q^2dy.
\end{equation}
\end{lemma}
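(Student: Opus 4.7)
The plan is to run the standard energy/cutoff argument for second-order elliptic equations, which is what the name ``Caccioppoli'' signals. Because $0<r<\frac{1}{2}\mathrm{dist}(x,\Gamma)$, the ball $B(x,2r)$ is compactly contained in $\Omega$, so one can use a cutoff function supported in $B(x,2r)$ and obtain a test function in $H_0^1(\Omega)$; this avoids any boundary-data complication.

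Concretely, I would pick $\eta\in C_c^\infty(B(x,2r))$ with $0\leq\eta\leq 1$, $\eta\equiv 1$ on $B(x,r)$, and $|\nabla\eta|\leq C/r$. Since $L_q u_q=0$ in the weak sense, testing against $\varphi=\eta^2 u_q\in H_0^1(\Omega)$ yields
\[
\int_\Omega a^{ij}\partial_j u_q\,\partial_i(\eta^2 u_q)\,dy \;=\; \int_\Omega q\,\eta^2 u_q^2\,dy.
\]
Expanding $\partial_i(\eta^2 u_q)=\eta^2\partial_i u_q+2\eta(\partial_i\eta)u_q$, using assumption \textbf{a1} for symmetry and assumption \textbf{a2} for ellipticity on the quadratic term, and bounding the mixed term by Cauchy--Schwarz, I arrive at
\[
\lambda\int \eta^2|\nabla u_q|^2\,dy \;\leq\; 2\|(a^{ij})\|_\infty\int \eta|\nabla\eta|\,|\nabla u_q|\,|u_q|\,dy \;+\; \Lambda\int \eta^2 u_q^2\,dy.
\]

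The key technical step is Young's inequality applied to the cross term: with a small parameter $\varepsilon$, $\eta|\nabla u_q|\cdot|\nabla\eta|\,|u_q|\leq \varepsilon\eta^2|\nabla u_q|^2+\frac{1}{4\varepsilon}|\nabla\eta|^2 u_q^2$. Choosing $\varepsilon$ to absorb half of the gradient term into the left-hand side leaves
\[
\frac{\lambda}{2}\int \eta^2|\nabla u_q|^2\,dy \;\leq\; \frac{C(\lambda,\|(a^{ij})\|_\infty)}{r^2}\int_{B(x,2r)}u_q^2\,dy \;+\;\Lambda \int_{B(x,2r)}u_q^2\,dy.
\]
Since $2r\leq\mathrm{diam}(\Omega)$, we have $1\leq \mathrm{diam}(\Omega)^2/(4r^2)$, so the lower-order $\Lambda$ term can be folded into the $r^{-2}$ term at the cost of a constant depending on $\mathrm{diam}(\Omega)$ and $\Lambda$. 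Restricting the left integral to $B(x,r)$, where $\eta\equiv 1$, produces the claimed inequality \eqref{e4} with $\widehat{C}=\widehat{C}(\Omega,(a^{ij}),\Lambda)$.

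No step here is really an obstacle; the only thing to watch is consistency of signs between the operator $L_q=\partial_i(a^{ij}\partial_j\cdot)+q$ and the choice of test function, and the final absorption of the zeroth-order contribution of $q$ into the $r^{-2}$ factor using that $r$ is bounded above by the diameter of $\Omega$. No hypothesis on $q$ beyond $\|q\|_{L^\infty(\Omega)}\leq\Lambda$ is used, and $q\in\mathscr{Q}_0$ is invoked only to give meaning to $u_q$.
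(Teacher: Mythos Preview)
Your proof is correct and is essentially the same Caccioppoli/energy argument the paper uses. The only cosmetic difference is in the treatment of the cross term: the paper tests with $v=\chi u_q$ (linear in the cutoff), rewrites the mixed term as $-\tfrac12\int a^{ij}\partial_i u_q^2\,\partial_j\chi$ and integrates by parts once more (hence it also assumes $|\partial^\gamma\chi|\le K r^{-|\gamma|}$ for $|\gamma|\le 2$), whereas you test with $\eta^2 u_q$ and absorb via Young's inequality, needing only first-derivative bounds on the cutoff; both routes are standard and yield the same conclusion with the same dependence of $\widehat C$ on $\Omega$, $(a^{ij})$ and $\Lambda$.
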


\begin{proof}
We start by noticing that the following identity holds true in a straightforward way
\begin{equation}\label{e5}
\int_\Omega a^{ij}\partial _i u_q\partial _jv dy =\int_\Omega qu_qv dy,\; \; v\in C_0^1(\Omega ),
\end{equation}
We pick $\chi \in C_c^\infty (B(x,2r))$ satisfying $0\leq \chi \leq 1$, $\chi =1$ in a neighborhood of $B(x,r)$ and $|\partial ^\gamma \chi|\leq Kr^{-|\gamma |}$ for $|\gamma |\leq 2$, where $K$ is a constant not depending on $r$. Then identity \eqref{e5} with $v=\chi u$ gives
\begin{align*}
\int_\Omega \chi a^{ij}\partial _iu_q\partial _ju_q dy&=-\int_\Omega u_q a^{ij}\partial _iu\partial _j \chi dy +\int_\Omega \chi qu_q^2dy
\\
&=-\frac{1}{2}\int_\Omega  a^{ij}\partial _iu_q^2\partial _j \chi dy +\int_\Omega \chi qu_q^2dy
\\
&=\frac{1}{2}\int_\Omega  u_q^2\partial _i\left( a^{ij}\partial _j \chi \right) dy +\int_\Omega \chi qu_q^2dy.
\end{align*}
Therefore, since
\[
\int_\Omega \chi a^{ij}\partial _iu_q\partial _ju_q dy\geq \lambda \int_\Omega \chi |\nabla u_q|^2dy,
\]
 \eqref{e4} follows immediately.
\end{proof}

\begin{theorem}\label{t4} 
 For any  compact subset $K\subset \Omega$, $u^{-2r}_q\in L^1(K)$, for some $r=r(K,u_q)>0$.
\end{theorem}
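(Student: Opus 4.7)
My plan is to show that $\log|u_q|$ is locally in $\mathrm{BMO}$ on $\Omega$ and then to apply the John--Nirenberg inequality, which converts $\mathrm{BMO}$ control into exponential integrability of $|\log|u_q||$, hence into integrability of negative powers of $|u_q|$. The crucial ingredient is the strong unique continuation property for the operator $L_q$, which forces the solution $u_q$ to have finite vanishing order at each point of $\Omega$.

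\textbf{Step 1 (doubling inequality).} The key input is a doubling estimate of the form
\[
\int_{B(x,2r)} u_q^2\, dy \leq C_D \int_{B(x,r)} u_q^2\, dy
\]
valid for all $x \in K$ and $0 < r < r_0$, where $r_0$ depends on $\mathrm{dist}(K,\Gamma)$ and $C_D = C_D(u_q, K)$. Such a doubling estimate follows, under our hypotheses {\bf a1}--{\bf a4} and $q \in L^\infty(\Omega)$, from the Garofalo--Lin frequency function method (or Hadamard three-ball type identities). The hypothesis {\bf a5} ensures $g \not\equiv 0$ and hence, via well-posedness from Section~\ref{section3}, that $u_q \not\equiv 0$; together with strong unique continuation, this makes the doubling constant $C_D$ finite for the given solution $u_q$.

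\textbf{Step 2 ($A_\infty$ and John--Nirenberg).} Combined with a reverse Hölder inequality coming from the Caccioppoli estimate of Lemma~\ref{l1} and Moser iteration on $u_q^2$, the doubling estimate shows that $u_q^2$ is an $A_\infty$-Muckenhoupt weight on $K$. Equivalently, $\log|u_q| \in \mathrm{BMO}(K)$. The John--Nirenberg inequality then provides constants $\alpha, C > 0$ such that, for every ball $B \subset K$,
\[
\int_B \exp\!\bigl(\alpha\bigl|\log|u_q| - (\log|u_q|)_B\bigr|\bigr)\, dy \leq C|B|.
\]
Covering $K$ by finitely many such balls and choosing $0 < 2r < \alpha$ yields $\int_K |u_q|^{-2r}\, dy < \infty$, which is the desired conclusion.

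\textbf{Main obstacle.} The heart of the argument is Step 1: the doubling inequality uniformly for $x \in K$ and the quantification of the unique continuation principle. This requires Lipschitz regularity of the principal coefficients (supplied by {\bf a4}) and the fact that $u_q$ has no zeros of infinite order in $\Omega$. Everything else then follows from classical real-variable tools (Muckenhoupt $A_\infty$ theory and John--Nirenberg), and the fact that $r$ ends up depending on $u_q$ reflects that the vanishing order of $u_q$, and hence the doubling constant $C_D$, is not controlled uniformly in $q$.
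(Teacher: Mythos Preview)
Your proposal is correct and follows essentially the same Garofalo--Lin strategy as the paper: a doubling inequality from strong unique continuation, a reverse H\"older/sup estimate for the subsolution $u_q^2$ via Moser iteration, and then a classical real-variable theorem to extract integrability of negative powers. The only cosmetic difference is that the paper invokes the Coifman--Fefferman theorem directly on the reverse H\"older inequality to land in $A_p$, whereas you pass through the equivalent formulation $\log|u_q|\in\mathrm{BMO}$ and John--Nirenberg; both routes yield the same conclusion.
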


\begin{proof}
For sake of simplicity, we use in this proof $u$ in place of $u_q$.

\smallskip
We first prove that $u^2$ is locally a Muckenhoupt  weight. We follow the method introduced by Garofalo and Lin in \cite{GL}.

\smallskip
Let $B_{4R}=B(x,4R)\subset \Omega$. According to Proposition 3.1 in \cite{MV} (page 7), we have the following so-called doubling inequality
\begin{equation}\label{e6}
\int_{B_{2r}}u^2dy \leq \widetilde{C}\int_{B_r}u^2dy,\; 0<r\leq R.
\end{equation}
Here and until the end of the proof, $\widetilde{C}$ denotes a generic constant that can depend on $u$ and $R$ but not in $r$.

\smallskip
Since $\partial _j (a^{ij}\partial _iu^2) =2u\partial _j (a^{ij}\partial _iu)+2a^{ij}\partial _iu\partial _j u$, we have 
\[ 
 \partial _j (a^{ij}\partial _iu^2)+2qu^2=2a^{ij}\partial _iu\partial _j u\geq \lambda |\nabla u|^2 \geq 0\; \mbox{in}\; \Omega .
\]
By \cite{GT}[Theorem 9.20, page 244], we have, noting that $u^2\in W^{2,n}(B_{4R})$,
\begin{equation}\label{e7}
\sup_{B_r}u^2\leq \frac{\widetilde{C}}{|B_{2r}|}\int_{B_{2r}} u^2 dy.
\end{equation}
On the other hand, we have trivially
\begin{equation}\label{e8}
\left( \frac{1}{|B_{r}|}\int_{B_{r}} u^{2(1+\delta )} dx \right)^{\frac{1}{1+\delta}}\leq \sup_{B_r}u^2,\; \mbox{for any}\; \delta >0.
\end{equation}

From \eqref{e6}, \eqref{e7} and \eqref{e8}, we obtain
\begin{equation}\label{e9}
\left( \frac{1}{|B_{r}|}\int_{B_{r}} u^{2(1+\delta )} dy \right)^{\frac{1}{1+\delta}}\leq \widetilde{C}\left(\frac{1}{|B_{r}|}\int_{B_{r}} u^2 dy\right),\; \mbox{for any}\; \delta >0.
\end{equation}
In other words, $u^2$ satisfies a reverse H\"older inequality. Therefore, we can apply a theorem by Coifman and Fefferman \cite{CF} (see also \cite{Ko}[Theorem 5.42, page 136]). We get
\begin{equation}\label{e10}
\frac{1}{|B_{r}|}\int_{B_{r}} u^2 dy\left( \frac{1}{|B_{r}|}\int_{B_r}u^{-\frac{2}{\kappa -1}}dy\right)^{\kappa -1}\leq \widetilde{C},
\end{equation}
where $\kappa >1$ is a constant depending on $u$ but not in $r$.

\smallskip
By the usual unique continuation property $u$ cannot vanish identically on $B_R=B_R(x)$. Therefore \eqref{e6} implies
\begin{equation}\label{e11}
\int_{B_R(x)}u^{-2r(x)}dy\leq \widetilde{C}.
\end{equation}
Let $K$ be a compact subset of $\Omega$. Then $K$ can be covered by a finite number, say $N$, of balls $B_i=B_R(x_i)$. Let $(\varphi _i)_{1\leq i\leq N}$ be a partition of unity subordinate to the covering $(B_i)$. If $r=r(K)=\min \{r(x_i);\; 1\leq i\leq N\}$, then
\[
\int_Ku^{-2r}dy=\sum_{i=1}^N\int_Ku^{-2r}\varphi _i dy\leq \sum_{i=1}^N\| \varphi _i\|_\infty \int_{B_i}u^{-2r}dy.
\]
\end{proof}

\begin{proof}[Proof of Theorem \ref{t1}]
As before, for simplicity, we use $u$ (resp. $\widetilde{u}$) in place of $u_q$ (resp. $u_{\widetilde{q}}$).

\smallskip
Let $(\Omega _k)$ be an increasing sequence of open subsets of $\Omega$, $\Omega _k\Subset \Omega$ for each $k$ and $\cup_k\Omega _k=\Omega$. By Theorem ~\ref{t4}, for each $k$,
\[
\frac{u^2}{\widetilde{u}^2}=\frac{\widetilde{q}}{q}\;\; \mbox{a.e.}\; \mbox{in}\; \Omega _k
\]
Therefore,
\[
\frac{u^2}{\widetilde{u}^2}=\frac{\widetilde{q}}{q}\;\; \mbox{a.e.}\; \mbox{in}\;  \Omega .
\]
Changing $\frac{u^2}{\widetilde{u}^2}$ by its continuous representative $\frac{\widetilde{q}}{q}$, we can assume that $\frac{u^2}{\widetilde{u}^2} \in C(\overline{\Omega})$. Moreover, the last identity implies also that $\frac{\widetilde{q}}{q}\geq 0$. Or $\min_{\overline{\Omega}}\left|\frac{\widetilde{q}}{q}\right|>0$. Therefore, $\frac{u}{\widetilde{u}}$ is of constant sign and doesn't vanish. But, $\frac{u}{\widetilde{u}}=1$ on $\{ x\in \Gamma ;\; g(x)\neq 0\}$. Hence,
\begin{equation}\label{e15}
\frac{u}{\widetilde{u}}=\sqrt{\frac{\widetilde{q}}{q}}.
\end{equation}
Let $v=u-\widetilde{u}$. Using $I_q=I_{\widetilde{q}}$ and \eqref{e15}, we obtain by a straightforward computation that $v$ is a solution of the BVP
\[
\left\{
\begin{array}{ll}
-\partial _i(a^{ij}\partial _jv) +\sqrt{\widetilde{q}q}v=0\;\; &\textrm{in}\;\; \Omega ,
\\
v=0 &\textrm{on}\;\; \Gamma .
\end{array}
\right.
\]
Since the operator $-\partial _i(a^{ij}\partial _j\, \cdot \, )
+\sqrt{\widetilde{q}q}$ under Dirichlet boundary condition is strictly
elliptic, we conclude that $v=0$. Hence, $u=\widetilde{u}$  and consequently $q=\widetilde{q}$.
\end{proof}

\section{Weighted stability estimates}\label{section5}

We show that $|u_q|$, $q\in \mathscr{Q}_0$, is a solution of  a certain BVP. To do so, we first prove that $|u_q|$ is Lipschitz continuous. 
  
\begin{proposition} \label{reg|u|}
We have $|u_q|\in C^{0,1}(\overline{\Omega})$, for any $q\in \mathscr{Q}_0$.
\end{proposition}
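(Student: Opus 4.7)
The strategy is very short: deduce from the regularity of $u_q$ that it is Lipschitz on $\overline{\Omega}$, then observe that taking absolute values does not destroy Lipschitz continuity.

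First I would invoke Theorem~\ref{t3}(i), which says that for any $q\in\mathscr{Q}_0$ the weak solution $u_q$ of the BVP \eqref{e1} belongs to $W^{2,p}(\Omega)$ with $p>n$ (this is where assumption {\bf a5} on the exponent $p$ is used). Since $\Omega$ is of class $C^{1,1}$ by assumption {\bf a3}, the Sobolev embedding
\[
W^{2,p}(\Omega)\hookrightarrow C^{1,\beta}(\overline{\Omega}),\qquad \beta=1-\tfrac{n}{p},
\]
applies. Consequently $u_q\in C^{1,\beta}(\overline{\Omega})$, and in particular $u_q$ is globally Lipschitz continuous on $\overline{\Omega}$ with some constant $L=L(q)>0$.

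The second step is the elementary pointwise inequality
\[
\bigl||u_q(x)|-|u_q(y)|\bigr|\leq |u_q(x)-u_q(y)|\leq L\,|x-y|,\qquad x,y\in\overline{\Omega},
\]
which follows from the reverse triangle inequality for the absolute value in $\mathbb{R}$. This immediately gives $|u_q|\in C^{0,1}(\overline{\Omega})$ with the same Lipschitz constant $L$.

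The argument is essentially trivial once Theorem~\ref{t3}(i) is in hand, so there is no real obstacle; the only point worth flagging is that we must appeal to part (i) (which holds on the full set $\mathscr{Q}_0$) rather than the a priori bound \eqref{3.3}, since the latter is stated only for $q\in\mathscr{Q}$. Note that we do not claim any uniformity of the Lipschitz constant with respect to $q$ here; such uniformity, when needed later (see Remark~\ref{remark3.1}), will follow instead from the corollary giving \eqref{3.3} on $\mathscr{Q}$.
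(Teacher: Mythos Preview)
Your proof is correct and follows essentially the same route as the paper: the paper also invokes the $C^{1,\beta}(\overline{\Omega})$ regularity of $u_q$ established in Section~\ref{section3}, notes that $C^{1,\beta}\hookrightarrow C^{0,1}$, and then concludes via the elementary inequality $||s|-|t||\leq|s-t|$. The only cosmetic difference is that the paper's proof opens by recording the formula $\partial_i|u_q|=\textrm{sg}_0(u_q)\,\partial_i u_q$ in $H^1$, but that identity is not actually needed for the Lipschitz claim itself (it is used in the subsequent computations of Section~\ref{section5}).
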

\begin{proof}
Let $q\in \mathscr{Q}_0$. We recall that, since  $u_q \in H^1(\Omega)$, $|u_q|\in H^1(\Omega)$  and 
\begin{eqnarray} \label{e23}
\partial_i |u_q| = \textrm{sg}_0(u) \partial_i u_q \quad \textrm{a.e. in}\; \Omega .
 \end{eqnarray}
Here 
\[
\textrm{sg}_0(s)= \left\{ \begin{array}{ll} 1\; &s>0,\\ 0\; &s=0\\-1 &s<0\end{array}\right.
\]

We saw in the preceding section that $u\in C^{1,\beta}(\overline{\Omega})$, $\beta =1-\frac{n}{p}$. Hence, $u\in  C^{0,1}(\overline{\Omega})$ because $C^{1,\beta}(\overline{\Omega})$ is continuously embedded in $C^{0,1}(\overline{\Omega})$. Then $|u_q|\in C^{0,1}(\overline{\Omega})$ as a straightforward consequence of the elementary inequality $||s|-|t||\leq |s-t|$, for any scalar numbers $s$ and $t$.
\end{proof}

Let $q\in \mathscr{Q}$. For simplicity, we use in the sequel $u$ instead of $u_q$. It follows from Proposition~\eqref{reg|u|} that
$|u|^2$ lies in $H^1(\Omega)$ and satisfies 
\begin{equation*} 
\partial_i |u|^2 = 2 |u|\partial_i |u| \quad \textrm{a.e. in}\; \Omega. 
\end{equation*}
Obviously, $ \partial_i u^2 = \partial_i |u|^2$, and so 
\begin{equation}\label{dws}
 |u|\partial_i |u|  = u\partial_i u \quad \textrm{a.e. in}\; \Omega. 
\end{equation}

 A straightforward computation gives
\[
\partial_i\left(a^{ij}\partial_j u^2\right)- 2 a^{ij}\partial_iu\partial_j u
=-2qu^2   \quad \textrm{a.e. in}\; \Omega.
\]
Using identity \eqref{e23}, we can rewrite the equation
above as follows
\begin{equation} \label{eee}
\partial_i\left(a^{ij}\partial_j |u|^2\right)- 2 a^{ij}\partial_i|u|\partial_j |u|
=-2qu^2   \quad \textrm{a.e. in}\;\Omega.
\end{equation}
Using the fact that $|u|H_0^1(\Omega )\subset H_0^1(\Omega )$ and integrations by parts, we get
\[
|u|  \partial_i\left(a^{ij}\partial_j |u|\right)= \partial_i\left(a^{ij}|u|\partial_j |u|\right)-a^{ij}\partial_j |u|\partial_i|u|\quad \rm{in}\; H^{-1}(\Omega ).
\]
But the right hand side of this identity belongs to $L^2(\Omega )$. Therefore, 
\begin{equation} \label{dwbis}
 \partial_i\left(a^{ij}|u|\partial_j |u|\right) = 
|u|  \partial_i\left(a^{ij}\partial_j |u|\right)
+ a^{ij}\partial_j |u|\partial_i|u| \quad \textrm{a.e. in}\; \Omega .
\end{equation}
Bearing in mind identities~\eqref{dwbis} and~\eqref{dws}, we obtain by a simple calculation
\begin{equation}\label{e24bis}
\partial_i\left(a^{ij}\partial_j |u|^2\right) = 
2 a^{ij}\partial_i|u|\partial_j |u|
+ 2|u| \partial_i\left(a^{ij}\partial_j |u|\right)   \quad  \textrm{a.e. in}\; \Omega,
\end{equation}
which, combined with~\eqref{eee},  implies 
\begin{equation}\label{e24}
|u|\partial_i\left(a^{ij}\partial_j |u|\right)
=-qu^2   \quad \textrm{a.e. in}\; \Omega.
\end{equation}

Now, as $q \geq q_0>0$ in $\Omega$, $\mu \,=\, \frac{1}{\sqrt q}$ is well defined. 
We set $J=J_q=\sqrt{I_q}$. Then substituting $|u|$ by $J\mu$ 
in \eqref{e24},
we get
\begin{equation}\label{e25}
J\partial_i\left(a^{ij}\partial_j  (J \mu)\right)= -\frac{J^2}{\mu} \quad  \textrm{a.e. in}\; \Omega.
\end{equation}

\begin{proof}[Proof of Theorem~\ref{t5}]
Let $\widetilde{\mu} =\frac{1}{\sqrt{\widetilde{q}}}$ and
$\widetilde{J}=\sqrt{I_{\widetilde{q}}}$. Identity \eqref{e25} with $q$ substituted by $\widetilde{q}$ yields
\begin{equation}\label{e27}
\widetilde J \partial_i\left(a^{ij}\partial_j  (\widetilde{J} \widetilde{\mu})\right) = -\frac{ \widetilde{J}^2}{\widetilde{\mu}} \quad \textrm{a.e. in}\; \Omega.
\end{equation}
Taking the difference of each side of equations \eqref{e25} and \eqref{e27}, we obtain
\begin{align}
J\partial_i\left(a^{ij}\partial_j  (J (\mu-
  \widetilde{\mu}))\right)-\frac{J^2}{\mu\widetilde{\mu}}(\mu-\widetilde{\mu})dx
&= 
\frac{ \widetilde{J}^2-J^2}{\widetilde{\mu}} \label{e27-25}
\\
&+J\partial_i\left(a^{ij}\partial_j  (\widetilde{\mu} 
(\widetilde{J}-J))\right)+ (\widetilde{J}-J) 
\partial_i\left(a^{ij}\partial_j
(\widetilde{\mu}
\widetilde{J})\right)   \quad \textrm{a.e. in}\; \Omega.\nonumber
\end{align}
We multiply  each side of the identity above by $\mu
-\widetilde{\mu}$ and we integrate over $\Omega$. We apply then the
Green's formula to the resulting identity. Taking into account that
$J = \widetilde{J}$ and $\mu = \widetilde \mu $ on $\Gamma$, we obtain
\begin{align*}
\int_{\Omega} a^{ij}\partial_i  (J (\mu- \widetilde{\mu}))\partial_j  (J (\mu- \widetilde{\mu}))dx  + \int_\Omega
\frac{1}{\mu\tilde \mu}J^2(\mu-\tilde \mu)^2dx &=
\int_\Omega \frac{ 1}{\widetilde{\mu}}(J^2-\widetilde{J}^2)(\mu 
- \widetilde{\mu})dx
\\
&\qquad +\int_{\Omega}
a^{ij}\partial_i  (\widetilde{\mu}
(J-\widetilde{J}))\partial_j(J(\mu-\widetilde{\mu}))dx
\\
&\qquad \quad+ \int_{\Omega}a^{ij}\partial_i
(\widetilde{\mu}
\widetilde{J}) \partial_j \left((\widetilde{J}-J) 
(\mu- \widetilde{\mu})\right)dx. 
\end{align*}
We apply the Cauchy-Schwarz's inequality to each term of the right hand side of the above identity. We obtain
\[
\widetilde C_0\|J(\mu-\widetilde{\mu})\|^2_{H^1(\Omega )} \leq
\widetilde C_1 \left(\|J\|_{L^2(\Omega)}+\|\widetilde J\|_{L^2(\Omega)}\right)
\|J-\widetilde{J}\|_{H^1(\Omega )} +\widetilde C_2 
\|J\|_{H^1(\Omega )}
\|J-\widetilde{J}\|_{H^1(\Omega )},
\]
where 
\begin{align*}
&\widetilde C_0 = \min(\lambda, q_0),\\
&\widetilde C_1 = \left(1+ 
\left(\frac{M }{q_0}\right)^{\frac{1}{2}}\right)
 + 
\|a^{ij}\|_{L^\infty(\Omega)} \left(\frac{1}{ \sqrt q_0}+
\frac{M}{q_0}\right),\\
&\widetilde C_2=\|a^{ij}\|_{L^\infty(\Omega)} \left(\frac{1}{ \sqrt q_0}+
\frac{M}{q_0}\right)^2. 
\end{align*}

Clearly, this estimate 
leads to \eqref{e26}.
 \end{proof}

\section{General stability estimates}\label{section6}

We assume in the present section, even if it is not necessary, that
conditions {\bf a1}-{\bf a6} are satisfied.

\subsection{Local behaviour of the solution of the BVP}


We pick $q\in  \mathscr{Q}_0$ and we observe that our assumption on $\Omega$ implies that this later possesses the uniform interior cone property. That is, there exist $R>0$  and $\theta \in ]0,\frac{\pi}{2}[$ such that for all $\widetilde{x}\in \Gamma $, we find $\xi \in \mathbb{S}^{n-1}$ for which
\[
\mathcal{C}(\widetilde{x})=\left\{x\in \mathbb{R}^n;\; |x-\widetilde{x}|<2R,\; (x-\widetilde{x})\cdot \xi >|x-\widetilde{x}|\cos \theta \right\}\subset \Omega .
\]
We set $x_0=\widetilde{x}+R\xi$ and, for $0<r<R\left( \frac{3}{\sin \theta}+1\right)^{-1}$, and we  consider the sequence
\[
x_k=\widetilde{x}+(R-kr)\xi ,\; k\geq 1.
\]
Let $N_0$ be the smallest integer satisfying
\[
(R-(N_0+1)r)\sin \theta \leq 3r.
\]
This and the fact that $(R-N_0r)\sin \theta >3r$ imply
\begin{equation}\label{e16}
\frac{R}{r}-\frac{3}{\sin \theta}-1\leq N_0\leq \frac{R}{r}-\frac{3}{\sin \theta}.
\end{equation}
Let assumption {\bf a7} holds and set $2\eta =\min_{\Gamma}|g|>0$. In light of estimate \eqref{3.3}, we get $|u_q|\geq \eta $ in a neighborhood (independent on $q$) of $\Gamma$ in $\overline{\Omega}$. Or \[
|x_N-\widetilde{x}|=R-N_0r \leq \left( \frac{3}{\sin \theta }+1\right)r.
\]
Therefore, there exists $r^\ast$ such that $|u_q|\geq \eta$ in $B(x_{N_0},r)$, for all $0<r\leq r^\ast$. Consequently,
\begin{equation}\label{e17}
Cr^{\frac{n}{2}}\leq \|u_q\|_{L^2(B(x_{N_0},r)}.
\end{equation}

We shall use the following three spheres Lemma. In the sequel, we assume $\|q\|_{L^\infty (\Omega )}\leq \Lambda$.
\begin{lemma}\label{l2}
There exist $N>0$ and $0<s <1$, that only depend  on $\Omega$, $(a^{ij})$ and 
$\Lambda >0$ such that, for all  $w\in H^1(\Omega )$ satisfying $L_qw=0$ in $\Omega$, $y\in \Omega$ and $0<r< \frac{1}{3 }\mbox{dist}(y,\Gamma )$,  
\[
r\|w\|_{H^1(B(y,2r))}\leq N\|w\|_{H^1(B(y,r))}^s\|w\|_{H^1(B(y,3r))}^{1-s}.
\]
\end{lemma}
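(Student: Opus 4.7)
The plan is to establish this three-spheres inequality via a Carleman estimate for $L_q$ followed by optimization in a large parameter, and then to upgrade the resulting $L^2$ bound to an $H^1$ bound using Caccioppoli's inequality (Lemma \ref{l1}). First I would reduce to a normalized setting: by translation I may place $y$ at the origin, and by the scaling $x = rz$ with $\tilde w(z) = w(rz)$ I replace the three radii $(r, 2r, 3r)$ by the fixed radii $(1, 2, 3)$; let $B_j = B(0,j)$ for $j \in \{1,2,3\}$. The rescaled operator reads $\widetilde L \tilde w = \partial_i(\tilde a^{ij}\partial_j \tilde w) + r^2 \tilde q\, \tilde w$, is uniformly elliptic with the same constant $\lambda$, has principal coefficients with Lipschitz seminorm bounded by $r\|\nabla a^{ij}\|_\infty$, and has potential of modulus at most $(\mathrm{diam}\,\Omega)^2 \Lambda$, so every constant produced below will depend only on $\Omega$, $(a^{ij})$ and $\Lambda$.

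On this fixed configuration I would use a radial Carleman weight $\phi(z) = \Phi(|z|)$ with $\Phi$ strictly decreasing and pseudoconvex with respect to the principal part of $\widetilde L$ (for instance $\phi(z) = -\log|z| + \alpha |z|^2$ with $\alpha$ small), for which there exist $\tau_0, C > 0$ such that
\[
\tau \int e^{2\tau\phi} v^2\, dz + \tau^{-1}\int e^{2\tau\phi}|\nabla v|^2\, dz \leq C \int e^{2\tau\phi}|\widetilde L v|^2\, dz
\]
for all $v \in C_c^\infty(B_3 \setminus \{0\})$ and all $\tau \geq \tau_0$. I would apply this to $v = \chi \tilde w$, with $\chi \in C_c^\infty(B_3 \setminus \{0\})$ a radial cutoff equal to $1$ on $\{3/4 \leq |z| \leq 9/4\}$ and supported in $\{1/2 \leq |z| \leq 5/2\}$. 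Since $\widetilde L \tilde w = 0$, one has $\widetilde L(\chi \tilde w) = [\widetilde L, \chi]\tilde w$, a first-order expression in $\tilde w$ whose coefficients are supported in the two narrow annuli $\{1/2 \leq |z| \leq 3/4\}$ and $\{9/4 \leq |z| \leq 5/2\}$, where $\phi$ is controlled from above by $\Phi(1/2)$ and $\Phi(9/4)$ respectively.

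Bounding the left-hand side from below by $\tau\, e^{2\tau \Phi(2)}\|\tilde w\|_{L^2(B_2 \setminus B_1)}^2$ and the right-hand side from above by $Ce^{2\tau \Phi(1/2)}\|\tilde w\|_{H^1(B_1)}^2 + Ce^{2\tau \Phi(5/2)}\|\tilde w\|_{H^1(B_3)}^2$, and setting $\mu_- = \Phi(1/2) - \Phi(2) > 0$ and $\mu_+ = \Phi(2) - \Phi(5/2) > 0$, I obtain
\[
\|\tilde w\|_{L^2(B_2 \setminus B_1)}^2 \leq C e^{2\tau \mu_-}\|\tilde w\|_{H^1(B_1)}^2 + C e^{-2\tau \mu_+}\|\tilde w\|_{H^1(B_3)}^2.
\]
Adding $\|\tilde w\|_{L^2(B_1)}^2 \leq \|\tilde w\|_{H^1(B_1)}^2$ to both sides and optimizing $\tau \geq \tau_0$ by balancing the two exponentials gives
\[
\|\tilde w\|_{L^2(B_2)} \leq C\|\tilde w\|_{H^1(B_1)}^s \|\tilde w\|_{H^1(B_3)}^{1-s}
\]
with $s = \mu_+/(\mu_- + \mu_+) \in (0, 1)$. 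A final application of Caccioppoli's inequality on a slightly enlarged ball converts the $L^2$-norm on the left to the full $H^1$-norm, at the cost of a factor $r^{-1}$ that, upon undoing the scaling, reappears as the factor $r$ on the left-hand side of the statement.

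The main obstacle is the Carleman estimate itself: one must choose the radial weight so that its Hessian has the correct positivity with respect to the principal symbol of $\widetilde L$ (Hörmander's pseudoconvexity condition), uniformly in the rescaled family $\tilde a^{ij}$. For operators with Lipschitz principal coefficients this is classical, but some care is needed to verify that the Carleman constants remain controlled: since the rescaled coefficients stay in a bounded subset of $W^{1,\infty}$ independent of $r$ (their seminorms being dominated by $\|\nabla a^{ij}\|_\infty \cdot \mathrm{diam}\,\Omega$), the constants $N$ and $s$ end up depending only on $\Omega$, $(a^{ij})$ and $\Lambda$, as announced.
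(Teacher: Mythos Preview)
Your proposal is correct and follows essentially the same route as the paper's proof in Appendix~A: rescale to unit radii, apply a Carleman estimate with a radially decreasing weight to a cutoff of the solution, control the commutator on the two supporting annuli, optimize in $\tau$, and undo the scaling to recover the factor $r$. The only inessential differences are your specific choice of weight and the final Caccioppoli step---the latter is actually unnecessary, since your Carleman inequality (like the paper's) already carries the gradient term $\tau^{-1}\int e^{2\tau\phi}|\nabla v|^2$ on the left, so the $H^1$ bound on $B_2\setminus B_1$ can be read off directly and the factor $r$ then emerges purely from the scaling of the $H^1$ norm.
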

For sake of completeness, we prove this lemma in appendix A by means of a Carleman inequality.

\smallskip
 We introduce the following temporary notation
 \[
 I_k=\| u_q\|_{H^1(B(x_k,r))},\;\; 0\leq k\leq N_0.
 \]
 Since $B(x_k,r)\subset B(x_{k-1},2r)$, $1\leq k\leq N_0$, we can apply Lemma \ref{l2}. By an induction argument in $k$, we obtain
 \[
 I_{N_0}\leq \left(\frac{C}{r}\right)^{\frac{1-s^{N_0}}{1-s}}C^{1-s^{N_0}}I_0^{s^{N_0}},
 \]
Shortening  if necessary $r^\ast$, we can always assume that $\frac{C}{r^\ast}\geq 1$. Hence
  \begin{equation}\label{e18}
 I_{N_0}\leq \frac{C}{r^t}I_0^{s^{N_0}},\;\; t=\frac{1}{1-s}.
 \end{equation}
 A combination of \eqref{e17} and \eqref{e18} yields
 \[
 Cr^{\frac{n}{2}+t}\leq I_0^{s^{N_0}}.
 \]
That is 
\begin{equation}\label{e19}
Cr^{\frac{n}{2}+t}\leq \|u\|_{H^1(B(x_0,r))}^{s^{N_0}}.
 \end{equation}
 
 Now, let $y_0\in \Omega$ such that the segment $[x_0,y_0]$ lies entirely in $\Omega$. Set
 \[
 d=|x_0-y_0|\;\; \mbox{and}\;\; \zeta =\frac{y_0-x_0}{|x_0-y_0|}.
 \]
 Consider the sequence, where $0<2r<d$,
 \[
 y_k=y_0-k(2r)\zeta ,\;\; k\geq 1.
 \]
 We have
 \[
 |y_k-x_0|=d-k(2r).
 \]
Let $N_1$ be the smallest integer such that $d-N_1(2r)\leq r$, or equivalently
 \[
\frac{d}{2r}-\frac{1}{2}\leq N_1 <\frac{d}{2r}+\frac{1}{2}.
 \]
 
 Using again Lemma \ref{l2}, we obtain
 \begin{equation}\label{e20}
 \widetilde{C}r^t\|u\|_{H^1(B(y_{N_1},2r))}\leq \|u\|_{H^1(B(y_0,2r))}^{s^{N_1}}.
 \end{equation}
 
 Since $|y_{N_1}-x_0|=d-N_1(2r)\leq r$, $B(x_0,r)\subset B(y_{N_1}, 2r)$. From \eqref{e19} and \eqref{e20}, shortening again $r^\ast$ if necessary, we have
 \[
\widetilde{C}r^t \left(Cr^{\frac{n}{2}+t}\right)^{\frac{1}{s^{N_0}}}\leq \|u\|_{H^1(B(y_0,2r))}^{s^{N_1}}.
 \]
Reducing  again  $r^\ast$ if necessary, we can assume that $\widetilde{C}r^t<1$, $0<r\leq r^\ast$. Therefore, we find
 \[
 \left(Cr\right)^{\frac{\frac{n}{2}+2t}{s^{N_0+N_1}}}\leq \|u\|_{H^1(B(y_0,2r))}.
 \]
 
 Let us observe that we can repeat the argument between $x_0$ and
 $y_0$ to  $y_0$ and $z_0\in \Omega$ such that  $[y_0,z_0]\subset
 \Omega$  and so on. By this way and since we have assumed that each
 two points of $\Omega$ can be connected by a broken line consisting
 of at most $m$ segments,  we have the following result.

 \begin{lemma}\label{l3}
Let assumption {\bf a7} be satisfied. There are constants $c>0$ and $r^\ast>0$, that can only 
depend on data and $\Lambda$, such that for any $x\in \overline{\Omega}$,
 \[
 e^{-ce^{\frac{c}{r}}}\leq \|u_q\|_{H^1(B(x,r)\cap \Omega )},\; \; 0<r\leq r^\ast .
 \]
 \end{lemma}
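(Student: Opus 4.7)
The plan is to complete the iteration already begun in the body of the excerpt: one traverses the broken line guaranteed by assumption~{\bf a6}, applying at each step the cone-based chain of three-spheres inequalities that has already been used to propagate the estimate from $x_0$ (near the boundary) to an arbitrary $y_0$ connected to $x_0$ by a single segment in $\Omega$.

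More precisely, for $x \in \overline{\Omega}$ arbitrary, invoke {\bf a6} to fix a broken line $[x_0,y_0^{(1)}], [y_0^{(1)}, y_0^{(2)}], \ldots , [y_0^{(m-1)}, x]$ lying in $\Omega$ with at most $m$ segments. The computation already performed in the excerpt delivers, with $A_0 = \tfrac{n}{2}+2t$,
\[
(Cr)^{A_0/s^{N_0 + N_1}} \leq \|u_q\|_{H^1(B(y_0^{(1)}, 2r))}.
\]
I would then replay the same segment-propagation argument with $B(y_0^{(1)}, r)$ playing the role previously played by $B(x_0,r)$, producing an analogous estimate at $y_0^{(2)}$, and so on. After at most $m$ such iterations one arrives at a bound of the form
\[
(Cr)^{A/s^{N}} \leq \|u_q\|_{H^1(B(x, 2r) \cap \Omega )},
\]
where $N = N_0 + N_1 + \cdots + N_m$ and $A$ depends only on the data.

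To finish, I would convert this into the stated double-exponential bound. Each $N_j$ is controlled by $\mathrm{diam}(\Omega)/r$ in view of \eqref{e16}, so $N \leq c_0/r$ for some $c_0 > 0$, whence $s^{-N} \leq e^{c_1/r}$. Taking logarithms in the displayed inequality gives
\[
\ln \|u_q\|_{H^1(B(x,r) \cap \Omega)} \;\geq\; A\, e^{c_1/r}\, \ln(Cr),
\]
and since $\ln(Cr) < 0$ for small $r$ with $|\ln(Cr)|$ merely of polynomial order in $1/r$ (and hence absorbed by the exponential), this reduces to $\|u_q\|_{H^1(B(x,r)\cap \Omega )} \geq e^{-c e^{c/r}}$ for $r \leq r^\ast$ after enlarging $c$ and shrinking $r^\ast$.

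The main obstacle is bookkeeping: one must verify that the constants appearing in each iteration depend only on $\Omega$, $(a^{ij})$, $g$, and $\Lambda$, not on the particular $x$ or the particular broken line chosen. The uniform $C^{1,\beta}$ bound \eqref{3.3} provides a base-step estimate independent of $q$, and the \emph{boundedness} of the number of segments (the parameter $m$ of {\bf a6}) is precisely what keeps the exponent $s^{-N}$ of the form $e^{O(1/r)}$ rather than something worse; without {\bf a6} one would have $m$ depending on the path, and the final bound would degrade accordingly.
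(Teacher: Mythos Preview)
Your proposal is correct and follows essentially the same approach as the paper: the paper states the lemma as the outcome of iterating the single-segment propagation along the broken line of at most $m$ segments furnished by {\bf a6}, and you have spelled out that iteration and the final conversion $(Cr)^{A/s^{N}}\to e^{-ce^{c/r}}$ (which the paper leaves implicit) in exactly the intended way. One small point you gloss over: for $x$ on or near $\Gamma$ the estimate is immediate from $|u_q|\geq \eta$ there (via {\bf a7} and \eqref{3.3}), so the broken-line argument is only needed for interior $x$, where {\bf a6} applies directly; the paper handles the boundary case via Remark~\ref{remark6.1}.
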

 
\begin{remark}\label{remark6.1}
We observe that we implicitly use the following property for
establishing Lemma \ref{l3}, which is a direct consequence of the
regularity of $\Omega$:  if $\widetilde{x}\in \Gamma$, then
$|B(\widetilde{x},r)\cap \Omega | \geq \kappa |B(\widetilde{x},r)|$,
$0<r$, where the constant
 $\kappa \in (0,1)$ does not depend on $\widetilde{x}$ and $r$.
\end{remark}
 
 Observing again that $|u_q|\geq \eta$ in a neighborhood of $\Gamma$
 in $\overline{\Omega}$, with $\eta>0$ is as above, a combination of Lemma
 ~\ref{l3} and Lemma ~\ref{l1} (Caccippoli's inequality) leads to
 
 \begin{corollary}\label{c1}
 Under assumption {\bf a7}, there are constants $c>0$ and $r^\ast>0$, that can depend only on data and $\Lambda$, such that for any $x\in \overline{\Omega}$,
 \[
 e^{-ce^{\frac{c}{r}}}\leq \|u_q\|_{L^2(B(x,r)\cap \Omega )},\; \; 0<r\leq r^\ast .
 \]
 \end{corollary}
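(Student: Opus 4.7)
The plan is to upgrade the $H^1$ lower bound of Lemma \ref{l3} to an $L^2$ lower bound by absorbing the gradient contribution via Caccioppoli's inequality (Lemma \ref{l1}), treating separately the points close to $\Gamma$, where an easier and stronger bound is available directly from assumption {\bf a7}. First I fix $\delta_0>0$ small enough that the neighborhood $\omega_{\delta_0}=\{x\in \overline{\Omega}: \mathrm{dist}(x,\Gamma)<2\delta_0\}$ is contained in the set where $|u_q|\geq \eta$ (this is precisely the set produced in the discussion preceding Lemma \ref{l2}, using \eqref{3.3} and {\bf a7}), and I shrink $r^\ast$ so that $r^\ast \leq \min(\delta_0/4,\eta/(2L))$, where $L$ is the uniform Lipschitz constant of $|u_q|$ furnished by Proposition \ref{reg|u|} and \eqref{3.3}.

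Case 1 (interior points): if $\mathrm{dist}(x,\Gamma)\geq \delta_0$, then for $0<r\leq r^\ast$ one has $B(x,r)\subset B(x,2r)\subset \Omega$, and Caccioppoli's inequality yields
\[
\|\nabla u_q\|_{L^2(B(x,r/2))}^2 \leq \frac{4\widehat{C}}{r^2}\, \|u_q\|_{L^2(B(x,r))}^2.
\]
Applying Lemma \ref{l3} at radius $r/2$ (which replaces $c$ by $2c$ inside the double exponential) gives
\[
e^{-2c e^{2c/r}} \leq \|u_q\|_{H^1(B(x,r/2)\cap \Omega)}^2 \leq \left(1+\tfrac{4\widehat{C}}{r^2}\right) \|u_q\|_{L^2(B(x,r)\cap \Omega)}^2,
\]
and the polynomial prefactor $\sqrt{1+4\widehat{C}/r^2}$ can be absorbed into the double exponential by enlarging $c$ at the expense of shrinking $r^\ast$ further. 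Case 2 (points near $\Gamma$): if $\mathrm{dist}(x,\Gamma)<\delta_0$, then $x\in \omega_{\delta_0}$ and $|u_q(x)|\geq \eta$; by the uniform Lipschitz continuity of $|u_q|$ we have $|u_q|\geq \eta/2$ on $B(x,\eta/(2L))\cap\Omega$, so for $0<r\leq r^\ast$
\[
\|u_q\|_{L^2(B(x,r)\cap \Omega)}^2 \geq \frac{\eta^2}{4}\, |B(x,r)\cap \Omega| \geq \frac{\eta^2 \kappa}{4}\, |B(x,r)|,
\]
where $\kappa$ is the constant of Remark \ref{remark6.1}. This yields a lower bound of order $r^{n/2}$, which trivially dominates any $e^{-ce^{c/r}}$ for $r$ small. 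Combining the two cases and adjusting the constant $c$ gives the stated uniform estimate.

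The only nontrivial step is Case 1, and the main technical care is in keeping the constants uniform in $q\in \mathscr{Q}$: this works because the constants $\widehat{C}$, $c$, $\eta$, $L$ and $r^\ast$ produced by Lemma \ref{l1}, Lemma \ref{l3}, \eqref{3.3} and assumption {\bf a7} all depend only on data and on the a priori bound $\Lambda$ on $\|q\|_{L^\infty(\Omega)}$, which is built into membership in $\mathscr{Q}$. Absorbing the polynomial factor $\sqrt{1+4\widehat{C}/r^2}$ into the double exponential is routine, requiring only the elementary inequality $\log(1+t^{-2}) \leq e^{c/r}$ for $r$ sufficiently small.
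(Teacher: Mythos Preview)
Your proof is correct and follows precisely the route the paper indicates: combine the $H^1$ lower bound of Lemma~\ref{l3} with Caccioppoli's inequality (Lemma~\ref{l1}) to pass to an $L^2$ lower bound in the interior, and handle points near $\Gamma$ directly via $|u_q|\ge \eta$ from assumption {\bf a7} and estimate~\eqref{3.3}. The paper states this only as a one-line remark, and you have filled in the details accurately, including the bookkeeping needed to keep the constants uniform in $q\in\mathscr{Q}$.
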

 
 From this corollary, we get the key lemma that we will use to establish general stability estimates for our inverse problem.
 
 \begin{lemma}\label{l4}
 Assume that {\bf a7} is fulfilled. There exists $\delta ^\ast$ so that, for any $x\in \Omega$ and $0<\delta \leq \delta ^\ast$,
 \[
 \{y\in B(x,r^\ast )\cap \Omega ;\; u_q(x)^2\geq \delta \}\neq \emptyset .
 \]
 \end{lemma}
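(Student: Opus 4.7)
I would argue by contradiction, assuming that the statement of the lemma is made with the obvious typo correction (the dummy variable inside the set should be $y$, i.e.\ $u_q(y)^2\ge \delta$). Suppose the conclusion fails for some $x\in\Omega$ and some $\delta>0$; then $u_q(y)^2 < \delta$ for every $y\in B(x,r^{\ast})\cap\Omega$. Integrating this pointwise bound produces the upper estimate
\[
\|u_q\|_{L^2(B(x,r^{\ast})\cap\Omega)}^2 \;\le\; \delta \cdot |B(x,r^{\ast})\cap\Omega| \;\le\; C_n (r^{\ast})^{n}\,\delta,
\]
where $C_n$ depends only on $n$.

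The key input from the preceding analysis is Corollary~\ref{c1} applied at the radius $r=r^{\ast}$: it yields a strictly positive constant
\[
\gamma \;:=\; e^{-c e^{c/r^{\ast}}} \;>\; 0,
\]
which depends only on the data and on $\Lambda$ but is independent of $x\in\overline{\Omega}$, such that
\[
\|u_q\|_{L^2(B(x,r^{\ast})\cap\Omega)} \;\ge\; \gamma.
\]
Note that the uniformity in $x$ relies on the observation recorded in Remark~\ref{remark6.1} that $|B(\widetilde{x},r)\cap\Omega|\ge \kappa |B(\widetilde{x},r)|$ uniformly in $\widetilde{x}\in\Gamma$, which is already built into the proof of Corollary~\ref{c1}.

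Combining the two bounds gives $\gamma^2 \le C_n(r^{\ast})^n\,\delta$, which fails as soon as
\[
\delta \;\le\; \delta^{\ast} \;:=\; \frac{\gamma^{2}}{2\,C_n(r^{\ast})^n}.
\]
This contradiction yields the existence of the claimed $\delta^{\ast}$. There is no serious obstacle beyond invoking Corollary~\ref{c1}; the real technical work was already done in establishing the three-spheres inequality (Lemma~\ref{l2}) and the resulting uniform lower bound on the local $L^2$ mass of $u_q$ via the chain-of-balls argument based on assumption \textbf{a6}.
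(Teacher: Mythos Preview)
Your argument is correct and follows essentially the same route as the paper: both proofs derive a contradiction by combining the uniform lower bound from Corollary~\ref{c1} with the trivial upper bound $\|u_q\|_{L^2(B(x,r^\ast)\cap\Omega)}^2\le \delta\,|B(x,r^\ast)\cap\Omega|$. The only cosmetic difference is that the paper phrases the contradiction via a sequence $\delta_k\le 1/k$, whereas you extract an explicit threshold $\delta^\ast=\gamma^2/(2C_n(r^\ast)^n)$; your formulation is arguably cleaner, and your observation about the typo ($u_q(y)^2$ rather than $u_q(x)^2$) is correct.
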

 \begin{proof} 
If the result does not hold we can find a sequence $(\delta _k)$, $0<\delta _k\leq \frac{1}{k}$ such that
 \[
 \{y\in B(x,r^\ast)\cap \Omega ;\; u_q(x)^2\geq \delta _k\}=\emptyset .
 \]
Then
 \[
 u_q^2\leq \frac{1}{k}\; \; \mbox{in}\; B(x,r^\ast )\cap \Omega .
 \]
 In light of Corollary~\ref{c1}, we obtain
 \[
  e^{-ce^{\frac{c}{r^\ast}}}\leq \frac{1}{\sqrt k}|B(x,r^\ast )\cap
  \Omega |\leq \frac{|\Omega |}{\sqrt k},\; \mbox{for all}\; k\geq 1.
  \]
  This is impossible. Hence, the desired contradiction follows.
  \end{proof}

Further, we recall that the semi-norm $[f]_\alpha$, for $f\in C^\alpha(\overline
\Omega)$, is given as follows:
\[
[f]_\alpha =\sup \{|f(x)-f(y)||x-y|^{-\alpha};\; x,y\in \overline{\Omega} ,\; x\neq y\}.
\]

 \begin{proposition}\label{p2}
 Let $M>0$ be given and assume that assumption {\bf a7} holds. Then, for any $q\in \mathscr{Q}$, $f\in C^\alpha (\overline{\Omega})$ satisfying $\|f\|_{C^\alpha (\overline{\Omega})}\leq M$, we have
  \[
  \|f\|_{L^\infty (\Omega )}\leq \phi \left(\|fu_q^2\|_{L^\infty (\Omega)}\right).
  \]
  \end{proposition}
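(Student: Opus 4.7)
The plan is to localise the estimate by combining the pointwise data bound $|f(x)|u_q^2(x)\le \varepsilon:=\|fu_q^2\|_{L^\infty(\Omega)}$ with the quantitative unique continuation information encoded in Corollary \ref{c1}. Fix an arbitrary $x_0\in\Omega$ and a radius $r\in(0,r^\ast]$ to be chosen later. From Corollary \ref{c1}, $\|u_q\|_{L^2(B(x_0,r)\cap\Omega)}^2\ge e^{-2ce^{c/r}}$, while $|B(x_0,r)\cap\Omega|\le C r^n$ by the regularity of $\Omega$ (Remark \ref{remark6.1}). The elementary bound $\sup_{B(x_0,r)\cap\Omega}u_q^2\ge \|u_q\|_{L^2(B(x_0,r)\cap\Omega)}^2/|B(x_0,r)\cap\Omega|$ then produces a point $y=y(x_0,r)\in B(x_0,r)\cap\Omega$ with
\[
u_q^2(y)\ \ge\ c\,r^{-n}\,e^{-2ce^{c/r}}.
\]

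At this $y$, the data bound gives $|f(y)|\le \varepsilon/u_q^2(y)\le C\varepsilon\,r^n e^{2ce^{c/r}}$, while the $C^\alpha$ regularity of $f$ yields $|f(x_0)-f(y)|\le M|x_0-y|^\alpha\le M r^\alpha$. Summing,
\[
|f(x_0)|\ \le\ C\varepsilon\, r^n e^{2ce^{c/r}} + M r^\alpha.
\]
Since $x_0\in\Omega$ was arbitrary, this controls $\|f\|_{L^\infty(\Omega)}$. The right-hand side is then optimised in $r$. For $\varepsilon$ sufficiently small, I would pick $r$ so that $2ce^{c/r}\sim\tfrac12|\ln\varepsilon|$, namely $r\sim c/\ln|\ln\varepsilon|$; the first term is then rendered negligible (it decays like $\exp(-\tfrac12|\ln\varepsilon|)$ up to a polynomial prefactor), and the dominant contribution is $M r^\alpha\lesssim (\ln|\ln\varepsilon|)^{-\alpha}$. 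This bound has precisely the shape of the generic template $\phi(s)=C_0\bigl[|\ln C_1|\ln s||^{-1}+s\bigr]$, the exponent and the polynomial prefactors being absorbed into $C_0$ and $C_1$. For $\varepsilon$ not small, the constraint $r\le r^\ast$ becomes active; in that regime the a priori bound $\|f\|_{L^\infty(\Omega)}\le C\|f\|_{C^\alpha(\overline\Omega)}\le CM$ is already controlled by the $+s$ term of $\phi$ after enlarging $C_0$, which closes the estimate for all $\varepsilon$.

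The main obstacle is the extreme weakness of the available quantitative unique continuation. Corollary \ref{c1}, produced via the chain of three-spheres inequalities propagated along the broken line provided by assumption \textbf{a6}, delivers only a doubly exponential lower bound $e^{-ce^{c/r}}$ for the $L^2$-mass of $u_q$ on a ball of radius $r$; once inverted against the Hölder modulus $r^\alpha$, this is precisely what forces the final estimate to be only double-logarithmic in $\varepsilon$. Once this rate is accepted, the rest is a careful choice of $r=r(\varepsilon)$ and a book-keeping of constants to fit the generic form of $\phi$.
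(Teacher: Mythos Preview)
Your proof is correct and follows essentially the same route as the paper: use Corollary \ref{c1} to locate, near any given $x_0$, a point $y$ where $u_q^2$ is quantitatively bounded below, transfer the data bound $|f|u_q^2\le\varepsilon$ from $y$ to $x_0$ via the $C^\alpha$ modulus of $f$, and then optimise the resulting two-term estimate to obtain the double-logarithmic rate. The only cosmetic difference is that the paper introduces an auxiliary level $\delta$ (through Lemma \ref{l4}) and optimises over $\delta$, while you extract $y$ directly from the mean-value inequality $\sup u_q^2\ge \|u_q\|_{L^2}^2/|B|$ and optimise over $r$; this bypasses Lemma \ref{l4} but is otherwise the same argument.
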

   
  \begin{proof}
 Let $\delta ^\ast$ be as in Corollary ~\ref{c1}, $f\in C^\alpha (\overline{\Omega})$ and $0<\delta \leq \delta ^\ast$. For $x\in \overline{\Omega}$, we consider two cases: (a) $u_q(x)^2\geq \delta$  and (b) $u_q(x)^2\leq \delta$.
 
 \smallskip
(a) $u_q(x)^2\geq \delta$. We have
\begin{equation}\label{e21}
|f(x)|\leq \frac{1}{\delta}|f(x)u_q(x)^2|.
\end{equation}

(b) $u(x)^2\leq \delta$. We set
\[
r=\sup \{0<\rho ;\; u_q ^2<\delta \; \mbox{on}\; B(x,\rho )\cap \overline{\Omega}\}.
\]
By Lemma~\ref{l4}
\[
\{x\in B(x, r^\ast )\cap \overline{\Omega};\; u_q(x)^2\geq \delta\}\neq\emptyset .
\]
Hence, $r \leq r^\ast$ and 
\[
\partial B(x,r)\cap \{x\in B(x, r^\ast )\cap\overline{\Omega};\; u_q(x)^2 \geq \delta\}\neq\emptyset .
\]

Let $y\in\partial B(x,r)$ be such that  $u_q(y)^2\geq \delta $. We have,
\[
|f(x)|\leq |f(x)-f(y)|+|f(y)|\leq [f]_\alpha|x-y|^\alpha +\frac{1}{\delta}|f(y)u_q(y)^2|
\]
and then
\[
|f(x)|\leq Mr^\alpha +\frac{1}{\delta}|f(y)u_q(y)^2|.
\]

This and \eqref{e21} show
\begin{equation}\label{e22}
\|f\|_{L^\infty (\Omega )}\leq Mr^\alpha +\frac{1}{\delta}\|fu_q^2\|_{L^\infty (\Omega )}.
\end{equation}
Since $u_q^2\leq \delta$ in $B(x,r )\cap \overline{\Omega}$, Corollary~\ref{c1} implies
\[
 e^{-ce^{\frac{c}{r}}}\leq \sqrt \delta |B(x,r )\cap \Omega |\leq
 \sqrt \delta |\Omega| ,
\]
or equivalently
\[
r\leq \frac{c}{\ln \left( \frac{1}{c}|\ln |\Omega |\sqrt \delta |\right)} .
\]
This estimate in \eqref{e22} yields
\[
\|f\|_{L^\infty (\Omega )}\leq \frac{C_0}{\left[\ln \left( C_1|\ln
      |\Omega |\sqrt \delta |\right)\right]^\alpha} +\frac{1}{\delta}\|fu_q^2\|_{L^\infty (\Omega )}.
\]
Setting $e^s=C_1|\ln |\Omega |\sqrt \delta |$, the last inequality can be rewritten as
\[
\|f\|_{L^\infty (\Omega )}\leq \frac{C_0}{s^\alpha} +|\Omega |e^{\frac{2}{C_1}e^s}\|fu_q^2\|_{L^\infty (\Omega )}.
\]
An usual minimization argument with respect to $s$ leads to the existence of $\epsilon >0$, depending only on data, so that
\begin{equation}\label{equ1}
\|f\|_{L^\infty (\Omega )}\leq \psi \left(\|fu_q^2\|_{L^\infty (\Omega )} \right),
\end{equation}
with $\psi$ a function of the form
\[
\psi (s)=C_2|\ln C_3|\ln (s)||^{-1/\alpha}
\]
provided that $\|fu_q^2\|_{L^\infty (\Omega )} \leq \epsilon$.

\smallskip
When $\|fu_q^2\|_{L^\infty (\Omega )} \geq \epsilon$, we have trivially
\begin{equation}\label{equ2}
\|f\|_{L^\infty (\Omega )}\leq \frac{M}{\epsilon}\|fu_q^2\|_{L^\infty (\Omega )}.
\end{equation}

The desired estimate follows by combining \eqref{equ1} and \eqref{equ2}.
\end{proof}

We now turn our attention to the case without assumption {\bf a7}. First, we observe that we still have a weaker version of Corollary \ref{c1}. In fact all our analysis leading to Lemma \ref{l3} holds whenever we start with $\widetilde{x}\in \Gamma$ satisfying $2\eta =|g(\widetilde{x})|\neq 0$ and take $x_0$ in a neighborhood of $\widetilde{x}$ such that $|u|\geq \eta$ in a ball around $x_0$ (we note that, as we have seen before, such a ball can be chosen independently on $q$).

 \begin{lemma}\label{le1}
Let $K$ be a compact subset of $\Omega$. There exist $r^\ast >0$ and $c>0$, depending only on data, $\Lambda$ and $K$, so that, for any $x\in K$,
 \[
 e^{-ce^{\frac{c}{r}}}\leq \|u\|_{H^1(B(x,r))},\; \; 0<r\leq r^\ast .
 \]
 \end{lemma}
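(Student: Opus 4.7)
The plan is to adapt the proof of Lemma~\ref{l3}, replacing the full-boundary ingredient supplied by {\bf a7} with a single-point starting ball that is available just from {\bf a5}, and restricting the conclusion to the compact subset $K\Subset\Omega$.

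First, I would establish a base estimate on a fixed interior ball. Since by {\bf a5} $g\in W^{2-1/p,p}(\Gamma)$ with $p>n$ is not identically zero, the Sobolev embedding into $C(\Gamma)$ yields a point $\widetilde{x}\in \Gamma$ with $2\eta:=|g(\widetilde{x})|>0$. Using the uniform bound \eqref{3.3} together with the continuity of $u_q$ up to $\Gamma$, I select an interior point $x_0\in \Omega$ close enough to $\widetilde{x}$ and a radius $r_0>0$, both independent of $q\in \mathscr{Q}$, such that $B(x_0,r_0)\subset \Omega$ and $|u_q|\geq \eta$ on $B(x_0,r_0)$. In particular,
\[
\|u_q\|_{H^1(B(x_0,r))}\geq \|u_q\|_{L^2(B(x_0,r))}\geq Cr^{n/2},\quad 0<r\leq r_0,
\]
which is precisely the analogue of \eqref{e17} used in the proof of Lemma~\ref{l3}.

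Second, I would propagate this base estimate to $K$ by the interior chaining argument already developed. Fix $x\in K$. By {\bf a6}, the points $x_0$ and $x$ are connected by a broken line inside $\Omega$ consisting of at most $m$ segments. Since $K$ is compact, $d_K:=\mathrm{dist}(K,\Gamma)>0$, and I choose $r^\ast=r^\ast(K)>0$ small enough that every ball of radius $3r^\ast$ centered on the chain remains inside $\Omega$ (so that Lemma~\ref{l2} applies along the whole chain). Iterating the three-spheres inequality segment by segment, exactly as in the derivation of \eqref{e18}--\eqref{e20}, yields the desired double exponential lower bound
\[
e^{-ce^{c/r}}\leq \|u_q\|_{H^1(B(x,r))},\quad 0<r\leq r^\ast,
\]
with $c$ and $r^\ast$ depending only on $\Omega$, $(a^{ij})$, $\Lambda$ and $K$, and not on the particular $x\in K$ or $q\in \mathscr{Q}$.

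The main obstacle to anticipate is the uniformity of the constants as $x$ varies in $K$. Two ingredients are essential: (i) the bound $m$ provided by {\bf a6}, which caps the number of iterations of Lemma~\ref{l2} needed to reach $x$ from $x_0$ (each iteration raises the previous bound to a power $s^N\in (0,1)$, so any unbounded growth of $N$ as $x$ varies would destroy the estimate); and (ii) the strictly positive distance $d_K$, which prevents the radii in the chain from having to shrink as $x$ approaches $\Gamma$. Once these two uniformity points are secured, the rest of the argument is a mechanical repetition of the one used for Lemma~\ref{l3}, with no new analytic input required.
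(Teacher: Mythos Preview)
Your proposal is correct and follows essentially the same approach as the paper. The paper's own argument is just the one-sentence observation preceding the lemma: the analysis leading to Lemma~\ref{l3} goes through verbatim once one starts from a single boundary point $\widetilde{x}$ with $|g(\widetilde{x})|\neq 0$ (which exists by {\bf a5}) and an associated interior ball on which $|u_q|\geq\eta$; your write-up simply fleshes this out and makes the uniformity in $x\in K$ explicit via {\bf a6} and $\mathrm{dist}(K,\Gamma)>0$.
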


In light of this lemma, an adaptation of the proofs of Lemma \ref{l4} and Proposition \ref{p2} yields
  \begin{proposition}\label{pr1}
Given $M>0$ and $K$ a compact subset of $\Omega$, for any $f\in C^\alpha (\overline{\Omega})$ satisfying $\|f\|_{C^\alpha (\overline{\Omega})}\leq M$, we have
  \[
  \|f\|_{L^\infty (K)}\leq \phi \left(\|fu^2\|_{L^\infty (\Omega)}\right).
  \]
  \end{proposition}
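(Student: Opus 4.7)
The plan is to mimic the proof of Proposition \ref{p2} with Lemma \ref{le1} playing the role of Lemma \ref{l3}, restricting attention to points $x$ in the compact set $K \subset \Omega$. The overall skeleton (two-case dichotomy in $\delta$, optimisation over $\delta$ through the substitution $e^s = C_1 |\ln |\Omega|\sqrt\delta \, |$) is entirely unchanged; only the interior-regularity input needs to be adapted.

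First, I need an $L^2$-analogue of Lemma \ref{le1}. Setting $r_0 = \frac{1}{2}\mbox{dist}(K,\Gamma) > 0$, for every $x \in K$ and every $0 < r \leq \min(r^\ast, r_0/2)$, the ball $B(x, 2r)$ is contained in $\Omega$, so the Caccioppoli inequality of Lemma \ref{l1} applies and transforms the $H^1$ lower bound of Lemma \ref{le1} into
\[
e^{-c e^{c/r}} \leq \|u_q\|_{L^2(B(x,2r))},
\]
after absorbing the $1/r$ factor into the double exponential by enlarging $c$. This is the counterpart of Corollary \ref{c1} valid on $K$ without assumption \textbf{a7}. With this in hand, the proof of Lemma \ref{l4} is copied verbatim for $x \in K$: if the set $\{y \in B(x, r^\ast) : u_q(y)^2 \geq \delta\}$ were empty for arbitrarily small $\delta$, then $\|u_q\|_{L^2(B(x,r^\ast))}^2 \leq \delta |\Omega| \to 0$, contradicting the lower bound above. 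Hence there is $\delta^\ast > 0$, depending only on data, $\Lambda$ and $K$, such that this set is non-empty for all $0 < \delta \leq \delta^\ast$ and all $x \in K$.

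Second, I run the dichotomy of Proposition \ref{p2}. Fix $x \in K$ and $0 < \delta \leq \delta^\ast$. In the easy case $u_q(x)^2 \geq \delta$ one has $|f(x)| \leq \delta^{-1}|f u_q^2(x)|$. In the hard case $u_q(x)^2 < \delta$, set $r = \sup\{\rho > 0 : u_q^2 < \delta \mbox{ on } B(x,\rho) \cap \overline{\Omega}\}$. The preceding step forces $r \leq r^\ast$, and by continuity one finds $y \in \partial B(x,r) \cap \Omega$ with $u_q(y)^2 \geq \delta$. H\"older continuity of $f$ gives
\[
|f(x)| \leq [f]_\alpha |x-y|^\alpha + \delta^{-1}|f(y) u_q(y)^2| \leq M r^\alpha + \delta^{-1} \|f u_q^2\|_{L^\infty(\Omega)}.
\]
The $L^2$-lower bound forces $r \leq c / \ln(C_1 |\ln |\Omega|\sqrt\delta \, |)$, and the optimisation in $s = \ln(C_1|\ln|\Omega|\sqrt\delta \, |)$ already performed in Proposition \ref{p2} produces the logarithmic bound $\|f\|_{L^\infty(K)} \leq \psi(\|f u_q^2\|_{L^\infty(\Omega)})$ of the correct $\phi$-form when $\|f u_q^2\|_{L^\infty(\Omega)}$ is small, completed by the trivial linear estimate $\|f\|_{L^\infty(K)} \leq (M/\epsilon)\|fu_q^2\|_{L^\infty(\Omega)}$ when it is large.

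The main obstacle, as in the rest of Section \ref{section6}, is obtaining the uniform-in-$x$ constants in Lemma \ref{le1}, and verifying that the $y$ produced in the hard case still lies in $\Omega$ (so that $|fu_q^2(y)| \leq \|fu_q^2\|_{L^\infty(\Omega)}$). The first point is exactly what Lemma \ref{le1} delivers: the three-spheres chain behind Lemma \ref{l3} now starts from a fixed $x_0 \in \Omega$ near a boundary point $\widetilde{x} \in \Gamma$ where $|g(\widetilde{x})| > 0$ (available from \textbf{a5}), and propagates to any point of $K$ through a broken line of at most $m$ segments contained in $\Omega$ by \textbf{a6}; the length of this chain is uniformly bounded over $x \in K$, so all iteration constants depend only on data, $\Lambda$ and $K$. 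The second point is ensured by the choice $r^\ast \leq r_0/2$, which keeps the ball $B(x,r)$ (and hence $y$) strictly inside $\Omega$ throughout the argument.
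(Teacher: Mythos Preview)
Your proposal is correct and follows exactly the route the paper indicates: the paper's own proof of Proposition \ref{pr1} is simply the sentence ``an adaptation of the proofs of Lemma \ref{l4} and Proposition \ref{p2}'' in light of Lemma \ref{le1}, and you have carried out that adaptation faithfully, including the passage from the $H^1$ lower bound of Lemma \ref{le1} to an $L^2$ lower bound via Caccioppoli (the analogue of Corollary \ref{c1}) and the choice $r^\ast \le r_0/2$ to keep the auxiliary point $y$ inside $\Omega$.
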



 \subsection{Proof of stability estimates}
 
 \begin{proof}[Proof of Theorem \ref{t2}]
 Set $U=u_q^2$ and $\widetilde{U}=u_{\widetilde{q}}^2$. Then a straightforward computation shows 
 \[
 \sqrt{U}-\sqrt{\widetilde{U}}=\sqrt{I}\left( \frac{1}{\sqrt{q}}-\frac{1}{\sqrt{\widetilde{q}}}\right) +\frac{1}{\sqrt{\widetilde{q}}}\left( \sqrt{I}-\sqrt{\widetilde{I}}\right).
 \]
 This identity, in combination with Theorem \ref{t5}, gives 
 \begin{eqnarray}
 \left\|U-\widetilde{U}\right\|_{H^1(\Omega )} &\leq& \left(
   \left\|\sqrt U\right\|_{H^1(\Omega )} + 
\left\|\sqrt{\widetilde{U}}\right\|_{H^1(\Omega )} \right)
\left\| \sqrt{U}-\sqrt{\widetilde{U}}\right\|_{H^1(\Omega )}\nonumber \\
&\leq& C \left(
   \left\|\sqrt U\right\|_{H^1(\Omega )} + 
\left\|\sqrt{\widetilde{U}}\right\|_{H^1(\Omega )} \right)
\left(1+ \|  \sqrt{U}\|_{H^1(\Omega )}\right)
 \left\| \sqrt{I}-\sqrt{\widetilde{I}}\right\|_{H^1(\Omega )}^{\frac{1}{2}}.\label{e28}
 \end{eqnarray}
 
 On the other hand, since $L^\infty (\Omega )$ is continuously 
embedded in $H^{2-2\theta}$,
 
 \begin{align*}
 \left\| U-\widetilde{U}\right\|_{L^\infty (\Omega )}
\leq C \left\| U-\widetilde{U}\right\|_{H^{2-2\theta}(\Omega )}.
 \end{align*}
 This, the interpolation inequality (e.g. \cite{LM}[Remark 9.1, page 49])
 \[
 \left\|w\right\|_{H^{2-2\theta}(\Omega )}\leq C \left\|w\right\|_{H^1(\Omega )}^{2\theta} \left\|w\right\|_{H^2(\Omega )}^{1-2\theta},\; w\in H^2(\Omega ),
 \]
 and estimate \eqref{3.1} imply
 \begin{equation}\label{e29}
  \left\| U-\widetilde{U}\right\|_{L^\infty (\Omega )} \leq C \left\| \sqrt{I}-\sqrt{\widetilde{I}}\right\|_{H^1(\Omega )}^{\theta}.
 \end{equation}
But
 \[
 \left(q-\widetilde{q}\right)u^2= I-\widetilde{I}+\widetilde{q}\left(U-\widetilde{U}\right).
 \]
 Then \eqref{e29} yields
 \[
  \left\|  \left(q-\widetilde{q}\right)u^2\right\|_{L^\infty (\Omega )} \leq C \left\| \sqrt{I}-\sqrt{\widetilde{I}}\right\|_{H^1(\Omega )}^{\theta}.
  \]
  We complete the proof by applying Proposition \ref{p2} with $f=q-\widetilde{q}$.
 \end{proof}
 
  \begin{proof}[Proof of Theorem \ref{t2bis}]
  Similar to the previous one. We have only to apply Proposition \ref{pr1} instead of Proposition \ref{p2}.
 \end{proof}
\section{Conclusion}
This paper has investigated the inverse problem of recovering
a coefficient of a Helmholtz operator from internal data without
assumptions on the presence of critical points. It is shown 
through different stability estimates that
the reconstruction of the coefficient is accurate in  areas
 far from the critical points and  deteriorates near these points.
The optimality of the stability estimates will be investigated 
in future works.

\section{Acknowledgements}
The research of the second author is supported by the project Inverse
Problems and Applications funded by LABEX Persyval-Lab 2013-2015.
 
 \appendix
\section{The three spheres inequality}\label{appendix}

As we said before, the proof of the three spheres inequality for the
$H^1$ norm we present here is based on a Carleman inequality. Since the proof for $L_q$ is not so different from the one of a general second order operator in divergence form, we give the proof for this later. 
 
 \smallskip
 We first consider the following second order operator in divergence form:
\[
L=\mbox{div}(A\nabla \, \cdot ),
\]
where $A=(a^{ij})$ is a matrix with  coefficients in $W^{1,\infty}(\Omega )$ and there exists $\kappa >0$ such that
\begin{equation}\label{a9}
A(x)\xi \cdot \xi \geq \kappa |\xi |^2\;\; \mbox{for any}\; x\in \Omega \; \mbox{and}\; \xi \in \mathbb{R}^n.
\end{equation}

Let $\psi \in C^2(\overline{\omega})$ having no critical point in $\overline{\Omega}$, we set $\varphi =e^{\lambda \psi}$.

\begin{theorem}\label{thm1} (Carleman inequality)
There exist three positive constants $C$, $\lambda _0$ and $\tau _0$, that can depend only on $\psi$, $\Omega$ and a bound of $W^{1,\infty}$ norms of $a^{ij}$, $1\leq i,j\leq n$, such that the following inequality holds true:
\begin{equation}\label{a8}
C\int_\Omega \left (\lambda ^4\tau ^3\varphi ^3v^2+\lambda ^2\tau \varphi |\nabla v|^2 \right)e^{2\tau \varphi} dx \leq \int_\Omega (Lw)^2e^{2\tau \varphi}dx+\int_\Gamma \left( \lambda^3\tau ^3\varphi ^3v^2+\lambda \tau \varphi |\nabla v|^2\right)e^{2\tau \varphi} d\sigma ,
\end{equation}
for all $v\in H^2(\Omega )$, $\lambda \geq \lambda _0$ and $\tau \geq \tau _0$.
\end{theorem}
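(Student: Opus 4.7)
The plan is to use the now-classical \emph{conjugation and splitting} strategy for proving Carleman estimates with an exponential weight $e^{\tau\varphi}$ where $\varphi=e^{\lambda\psi}$, adapted here to the divergence-form operator $L=\mathrm{div}(A\nabla \cdot)$ with $W^{1,\infty}$ coefficients. The first move is to substitute $w=e^{\tau\varphi}v$ and compute the conjugated operator
\[
L_\tau w := e^{\tau\varphi}L(e^{-\tau\varphi}w) = \mathrm{div}(A\nabla w) - 2\tau\lambda\varphi\, A\nabla\psi\cdot\nabla w + \tau^2\lambda^2\varphi^2(A\nabla\psi\cdot\nabla\psi)\,w - \tau\,\mathrm{div}(\lambda\varphi A\nabla\psi)\,w.
\]
The claimed weighted estimate then translates into an unweighted inequality for $L_\tau w$ in terms of $w$, $\nabla w$, plus boundary contributions, which will be unwound at the end.

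The second step is to decompose $L_\tau = P_1 + P_2 + R$, where $P_1$ is the formally self-adjoint piece $\mathrm{div}(A\nabla w)+\tau^2\lambda^2\varphi^2(A\nabla\psi\cdot\nabla\psi)w$, $P_2$ is the formally skew-adjoint piece $-2\tau\lambda\varphi\, A\nabla\psi\cdot\nabla w + c_\tau(x)w$ with $c_\tau$ chosen (using $\mathrm{div}(\lambda\varphi A\nabla\psi)$) to make $P_2$ exactly skew-adjoint, and $R$ collects the strictly lower-order terms in $(\tau,\lambda)$. From $\|P_1 w+P_2 w\|_{L^2}^2=\|P_1 w\|^2+\|P_2 w\|^2+2(P_1 w,P_2 w)_{L^2}$ one retains the cross term $2(P_1 w,P_2 w)$ and integrates by parts systematically to rewrite it as a bulk integral of a quadratic form $Q_0(x,\tau,\lambda)w^2+A_{ij}(x,\tau,\lambda)\partial_i w\,\partial_j w$ plus boundary terms on $\Gamma$.

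The heart of the proof is to verify positivity of the bulk quadratic form. The leading contribution to $Q_0$ is produced by the Hessian of $\varphi$: since $\nabla\varphi=\lambda\varphi\nabla\psi$ and $D^2\varphi=\lambda^2\varphi\,\nabla\psi\otimes\nabla\psi+\lambda\varphi D^2\psi$, the rank-one piece $\lambda^2\varphi\,\nabla\psi\otimes\nabla\psi$ generates a term of order $\lambda^4\tau^3\varphi^3(A\nabla\psi\cdot\nabla\psi)^2$. The ellipticity \eqref{a9} together with the assumption that $\psi$ has no critical point in $\overline{\Omega}$ gives $A\nabla\psi\cdot\nabla\psi\geq\kappa|\nabla\psi|^2\geq c>0$, so this term dominates all remaining $O(\lambda^3\tau^3\varphi^3)$ contributions (those involving $D^2\psi$ and derivatives of $A$) once $\lambda\geq\lambda_0$, yielding $Q_0\geq c\lambda^4\tau^3\varphi^3$. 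An analogous argument based on the same rank-one piece of $D^2\varphi$ yields $A_{ij}\partial_i w\,\partial_j w\geq c\lambda^2\tau\varphi|\nabla w|^2$ in the bulk.

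Finally, the remainder $\|Rw\|_{L^2}^2$ has strictly lower $\tau$-degree than the just-established positivity, so for $\tau\geq\tau_0$ it is absorbed into the left-hand side. Reverting the substitution $w=e^{\tau\varphi}v$ turns $w^2$ and $|\nabla w|^2$ into $v^2 e^{2\tau\varphi}$ and $(|\nabla v|^2+\lambda^2\tau^2\varphi^2 v^2)e^{2\tau\varphi}$ respectively, and the parasitic $\lambda^2\tau^2\varphi^2 v^2 e^{2\tau\varphi}$ is swallowed by the dominant $\lambda^4\tau^3\varphi^3 v^2 e^{2\tau\varphi}$ term; the boundary integrals produced in Step 3 translate directly into the stated $\int_\Gamma(\lambda^3\tau^3\varphi^3 v^2+\lambda\tau\varphi|\nabla v|^2)e^{2\tau\varphi}\,d\sigma$. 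The main obstacle is the bookkeeping in the integration-by-parts step: one must execute every integration by parts so that no quadratic term is dropped, identify exactly which powers of $\tau$ and $\lambda$ multiply each resulting quadratic, and verify that the boundary residues indeed fit inside the stated boundary integral. The exponential nature of $\varphi=e^{\lambda\psi}$ is exploited exactly once, in turning $D^2\varphi$ into a rank-one dominant term, and this is precisely where the hypothesis $|\nabla\psi|>0$ on $\overline{\Omega}$ becomes indispensable.
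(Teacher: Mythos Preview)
Your proposal is correct and follows essentially the same approach as the paper: conjugate by $e^{\tau\varphi}$, split the conjugated operator into a principal part plus a lower-order remainder, drop the squares and retain only the cross term $\int P_1w\,P_2w$, integrate by parts to produce a bulk quadratic form plus boundary residues, and exploit the rank-one dominant piece $\lambda^2\varphi\,\nabla\psi\otimes\nabla\psi$ of $D^2\varphi$ together with $|\nabla\psi|>0$ to obtain positivity for $\lambda\geq\lambda_0$. The only cosmetic difference is that the paper's zero-order term $b=-2\lambda^2\tau\varphi|\nabla\psi|_A^2$ in $P_2$ is not the choice that makes $P_2$ exactly skew-adjoint (which is what you opt for), but the discrepancy is of strictly lower order in $(\lambda,\tau)$ and is absorbed exactly as your remainder $R$.
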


\begin{proof}
We let $\Phi =e^{-\tau \varphi}$ and $w\in H^2(\Omega)$. Then straightforward computations give
\[
Pw=[\Phi ^{-1}L \Phi ]w=P_1w+P_2w+cw.
\]
where,
\begin{align*}
P_1w&=aw+\mbox{div}\, ( A\nabla w),
\\
P_2w&= B\cdot \nabla w+bw,
\end{align*}
with
\begin{align*}
a&=a(x,\lambda ,\tau )= \lambda ^2\tau ^2 \varphi ^2 |\nabla \psi |_A^2,
\\
B&=B(x,\lambda ,\tau )=-2\lambda \tau \varphi A\nabla \psi ,
\\
b&=b(x,\lambda ,\tau )=-2\lambda ^2\tau \varphi|\nabla \psi |_A^2,
\\
c&=c(x,\lambda ,\tau )=-\lambda \tau \varphi \mbox{div}\, ( A\nabla \psi  )+\lambda ^2\tau \varphi |\nabla \psi |_A^2.
\end{align*}
Here
\[
|\nabla \psi |_A=\sqrt{A\nabla \psi \cdot \nabla \psi}.
\]
We have
\begin{equation}\label{a1}
\int_\Omega awB\cdot \nabla wdx=\frac{1}{2}\int_\Omega aB\cdot \nabla w^2dx=-\frac{1}{2}\int_\Omega \mbox{div}(aB) w^2dx+\frac{1}{2}\int_\Gamma aB\cdot \nu w^2d\sigma
\end{equation}
and
\begin{align}\label{a2}
\qquad \int_\Omega \mbox{div}\, ( A\nabla w)B\cdot \nabla wdx&=-\int_\Omega A\nabla w\cdot \nabla (B\cdot \nabla w)dx+\int_\Gamma B\cdot \nabla wA\nabla w\cdot\nu d\sigma
\\
&=-\int_\Omega B'\nabla w\cdot A\nabla wdx-\int_\Omega\nabla ^2wB\cdot A\nabla wdx+\int_\Gamma B\cdot \nabla w\nabla w\cdot\nu d\sigma.\nonumber
\end{align}
Here, $B'=(\partial _iB_j)$ is the jacobian matrix of $B$ and $\nabla ^2w=(\partial ^2_{ij}w)$ is the hessian matrix of $w$.

\smallskip
But,
\[
\int_\Omega B_i\partial ^2_{ij}w a^{ik}\partial _kwdx=-\int_\Omega B_i a^{jk}\partial ^2_{ik}w \partial ^2_{ij}w\partial _j wdx-\int_\Omega\partial _iB_ia^{jk}\partial _kw\partial _j wdx+\int_\Gamma B_i\nu _i a^{jk}\partial _kw\partial _jwd\sigma.
\]
Therefore,
\begin{equation}\label{a3}
\int_\Omega\nabla ^2wB\cdot A\nabla wdx=-\frac{1}{2}\int_\Omega [\mbox{div}(B)A+\widetilde{A}\nabla w]\cdot \nabla wdx+\frac{1}{2}\int_\Gamma |\nabla w|_A^2B\cdot \nu  d\sigma ,
\end{equation}
with $\widetilde{A}=(\widetilde{a}^{ij})$,
\[
\widetilde{a}^{ij}=B\cdot \nabla a^{ij}.
\]

It follows from \eqref{a2} and \eqref{a3},
\begin{align}\label{a4}
\int_\Omega \mbox{div}\, ( A\nabla w)B\cdot \nabla wdx=\frac{1}{2}\int_\Omega  \Big(-2B' &+\mbox{div}(B)A+\widetilde{A} \Big)\nabla w\cdot\nabla wdx 
\\
&+\int_\Gamma B\cdot \nabla w\nabla w\cdot\nu d\sigma - \frac{1}{2}\int_\Gamma |\nabla w|_A^2B\cdot \nu  d\sigma . \nonumber
\end{align}

A new integration by parts yields
\[
\int_\Omega \mbox{div}\, ( A\nabla w) bwdx=-\int_\Omega b|\nabla w|_A^2dx-\int_\Omega w\nabla b\cdot A\nabla wdx+\int_\Gamma bwA\nabla w\cdot \nu d\sigma .
\]
This and the following inequality
\[
-\int_\Omega w\nabla b\cdot A\nabla wdx\geq -\int_\Omega (\lambda ^2\varphi )^{-1}|\nabla b|_A^2w^2dx-\int_\Omega \lambda ^2\varphi |\nabla w|_A^2dx ,
\]
imply
\begin{equation}\label{a5}
\int_\Omega \mbox{div}\, ( A\nabla w) bwdx\geq -\int_\Omega (b+\lambda ^2\varphi )|\nabla w|_A^2dx-\int_\Omega (\lambda ^2\varphi )^{-1}|\nabla b|_A^2w^2dx+\int_\Gamma bwA\nabla w\cdot \nu d\sigma .
\end{equation}

Now a combination of \eqref{a1}, \eqref{a4} and \eqref{a5} leads
\begin{equation}\label{a6}
\int_\Omega P_1wP_2wdx -\int_\Omega c^2w^2dx\geq \int_\Omega fw^2dx+\int_\Omega F\nabla w\cdot \nabla w dx+\int_\Gamma g(w)d\sigma ,
\end{equation}
where,
\begin{align*}
f&=-\frac{1}{2}\mbox{div}(aB)+ab-(\lambda ^2\varphi )^{-1}|\nabla b|_A^2-c^2,
\\
F&=-B'+\frac{1}{2}\Big(\mbox{div}(B)A +\widetilde{A}\Big) -(b+\lambda ^2\varphi )A,
\\
g(w)&=\frac{1}{2}aw^2B\cdot \nu-\frac{1}{2}|\nabla w|_A^2B\cdot \nu+B\cdot \nabla wA\nabla w \cdot \nu+bwA\nabla w \cdot \nu .
\end{align*}

Using the elementary inequality $(s-t)^2\geq s^2/2-t^2$, $s$, $t>0$, we obtain
\[
\|Pw\|_2^2\geq (\|P_1w+P_2w\|_2-\|cw\|_2)^2\geq \frac{1}{2}\|P_1w+P_2w\|_2^2-\|cw\|_2^2\geq \int_\Omega P_1wP_2w dx-\int_\Omega c^2w^2dx.
\]
In light of \eqref{a6}, we get
\begin{equation}\label{a7}
\|Lw\|_2^2\geq \int_\Omega fw^2dx+\int_\Omega F\nabla w\cdot \nabla w dx+\int_\Gamma g(w)d\sigma .
\end{equation}

After some straightforward computations, we find that there exist three positive constants $C_0$, $C_1$, $\lambda _0$ and $\tau_0$, that can depend only on $\psi$, $\Omega$ and a bound of $W^{1,\infty}$ norms of $a^{ij}$, $1\leq i,j\leq n$, such that for all $\lambda \geq \lambda _0$ and $\tau \geq \tau_0$,
\begin{align*}
&f\geq C_0 \lambda ^4\tau ^3\varphi ^3,
\\
&F\xi \cdot \xi \geq C_0\lambda ^2\tau \varphi |\xi |^2,\;\; \textrm{for any}\; \xi \in \mathbb{R}^n,
\\
&|g(w)|\leq C_1\left( \lambda ^3\tau ^3\varphi ^3w^2+\lambda \tau \varphi |\nabla w|^2 \right).
\end{align*}
Hence,
\[
C\int_\Omega \left (\lambda ^4\tau ^3\varphi ^3w^2+\lambda ^2\tau \varphi |\nabla w|^2 \right) dx \leq \int_\Omega (Pw)^2dx+\int_\Gamma \left( \lambda^3\tau ^3\varphi ^3w^2+\lambda \tau \varphi |\nabla w|^2\right) d\sigma .
\]
As usual, we take $w=\Phi ^{-1}v$, $v\in H^1(\Omega )$, in the previous inequality to derive the Carleman estimate \eqref{a8}.
\end{proof}

\begin{remark}\label{rem1}
i) We notice that \eqref{a8} is still valid when $v\in H^1(\Omega )$ with $Lv\in L^2(\Omega )$. Also, the inequality \eqref{a8} can be extended to an operator $\widetilde{L}$ of the form
\[
\widetilde{L}= L+L',
\]
where $L'$ is a first order operator with bounded coefficients. For the present case, the constants $C$, $\lambda _0$ and $\tau _0$ in the statement of Theorem \ref{thm1} may depend also on a bound of $L^\infty$ norms of the coefficients of $L'$.

\smallskip
ii) We can substitute $L$ by an operator $L_r$ of the same form, whose coefficients $a^{ij}$ depend on the parameter $r$, $r$ belonging to some set $I$. Let $L' _r$ be a first order operator with coefficients depending also on the parameter $r$. Under the assumption that the coefficients of $L_r$ are uniformly bounded in $W^{1,\infty}(\Omega )$ with respect to $r$, the coefficients of $L_r'$ are uniformly bounded in $L^\infty (\Omega )$ with respect to the parameter $r$ and the ellipticity condition \eqref{a9} holds with $\kappa$ independent on $r$, we can paraphrase the proof of Theorem \ref{thm1} and i). We find that \eqref{a8} is true when $\widetilde{L}$ is substituted by $\widetilde{L}= L_r+L_r'$, with constants $C$, $\lambda _0$ and $\tau _0$, independent on $r$. That is we have the following result: there exist three constants $C$, $\lambda _0$ and $\tau _0$, that can depend on the uniform bound of the coefficients of $L_r$ in $W^{1,\infty}(\Omega )$ and the uniform bound of the coefficients of $L_r'$ in $L^\infty (\Omega )$, such that for any $v\in H^1(\Omega )$ satisfying $\widetilde{L}_rv\in L^2(\Omega )$, 
$\lambda \geq \lambda _0$ and $\tau \geq \tau _0$, 
\begin{equation}\label{a10}
C\int_\Omega \left (\lambda ^4\tau ^3\varphi ^3v^2+\lambda ^2\tau \varphi |\nabla v|^2 \right)e^{2\tau \varphi} dx \leq \int_\Omega (\widetilde{L}_rw)^2e^{2\tau \varphi}dx +\int_\Gamma \left( \lambda^3\tau ^3\varphi ^3v^2+\lambda \tau \varphi |\nabla v|^2\right)e^{2\tau \varphi} d\sigma .
\end{equation}
\end{remark}

\begin{lemma}(Three spheres inequality)\label{lem2}
There exist $C>0$ and $0<s <1$, that only depend on a bound of $W^{1,\infty}$ norms of the coefficients of $L$ and a bound of $L^\infty$ norms of the coefficients of $L'$,  such that, for all  $v\in H^1(\Omega )$ satisfying $\widetilde{L}v=0$ in $\Omega$, $y\in \Omega$ and $0<r< \frac{1}{3 }\mbox{dist}(y,\Gamma )$,  
\[
r\|v\|_{H^1(B(y,2r))}\leq C\|v\|_{H^1(B(y,r))}^s\|v\|_{H^1(B(y,3r))}^{1-s}.
\]
\end{lemma}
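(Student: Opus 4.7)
The plan is to apply the Carleman inequality from Remark~\ref{rem1}(ii) to a suitable cutoff of $v$ with a radial weight singular at $y$, and to optimise in the Carleman parameter $\tau$ to extract a Hadamard-type interpolation. To keep the constants independent of $r$, I first rescale to unit size: translate so $y=0$ and set $\tilde v(\xi)=v(r\xi)$. Then $\tilde v$ solves $\widetilde L_r\tilde v=0$, where $\widetilde L_r$ has rescaled coefficients $\tilde a^{ij}(\xi)=a^{ij}(r\xi)$, $\tilde b^i(\xi)=r\,b^i(r\xi)$, $\tilde c(\xi)=r^2c(r\xi)$, uniformly bounded (in $W^{1,\infty}$ and $L^\infty$ respectively) as $r$ varies over a fixed bounded interval, with the same ellipticity constant. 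By Remark~\ref{rem1}(ii), the Carleman inequality \eqref{a10} then holds for $\widetilde L_r$ with parameters $\lambda_0,\tau_0,C$ independent of $r$.

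\textbf{Cutoff, Carleman, balancing.} Choose a smooth radial weight $\psi(\xi)=-\log|\xi|$, so that $\varphi=e^{\lambda\psi}=|\xi|^{-\lambda}$ is smooth, strictly decreasing in $|\xi|$, with no critical point on $\mathbb{R}^n\setminus\{0\}$. Pick $\chi\in C_c^\infty(\mathbb{R}^n)$ equal to $1$ on $\{3/4\le|\xi|\le 9/4\}$ and supported in $\{1/2\le|\xi|\le 11/4\}$, with $|\partial^\alpha\chi|\le C_\alpha$. Applying \eqref{a10} to $\chi\tilde v$ and using $\widetilde L_r\tilde v=0$, the source on the right reduces to a commutator $\widetilde L_r(\chi\tilde v)$ involving only derivatives of $\chi$, hence supported in the two transition annuli $I=\{1/2\le|\xi|\le 3/4\}\subset B(0,1)$ and $O=\{9/4\le|\xi|\le 11/4\}\subset B(0,3)$, with pointwise bound $|\widetilde L_r(\chi\tilde v)|\le C(|\tilde v|+|\nabla\tilde v|)$. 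Restricting the LHS of \eqref{a10} to the annulus $A=\{1\le|\xi|\le 2\}\subset\{\chi\equiv1\}$ and bounding $\varphi$ by its extremal values on each region yields
\[
e^{2\tau\varphi(2)}\|\tilde v\|_{H^1(A)}^2\le C\bigl(e^{2\tau\varphi(1/2)}\|\tilde v\|_{H^1(B(0,1))}^2+e^{2\tau\varphi(9/4)}\|\tilde v\|_{H^1(B(0,3))}^2\bigr),
\]
after absorbing the $\lambda$- and $\varphi$-powers into $C$. Since $\varphi(1/2)>\varphi(2)>\varphi(9/4)$, dividing by $e^{2\tau\varphi(2)}$ produces two exponentials of opposite sign in $\tau$. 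Setting $E=\|\tilde v\|_{H^1(B(0,1))}$, $G=\|\tilde v\|_{H^1(B(0,3))}$, and balancing at $\tau=\log(G/E)/(\varphi(1/2)-\varphi(9/4))$ (the constraint $\tau\ge\tau_0$ is satisfied once $G/E$ is large enough; otherwise $E$ and $G$ are comparable and the conclusion is immediate) gives
\[
\|\tilde v\|_{H^1(B(0,2))}\le \|\tilde v\|_{H^1(B(0,1))}+\|\tilde v\|_{H^1(A)}\le CE^sG^{1-s},\qquad s=\frac{\varphi(2)-\varphi(9/4)}{\varphi(1/2)-\varphi(9/4)}\in(0,1),
\]
where I also used $E\le E^sG^{1-s}$ when $E\le G$.

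\textbf{Unwinding the rescaling.} A change of variables yields $\|\tilde v\|_{H^1(B(0,\rho))}^2=r^{-n}\bigl(\|v\|_{L^2(B(y,\rho r))}^2+r^2\|\nabla v\|_{L^2(B(y,\rho r))}^2\bigr)$, which for $r\le 1$ is bounded below by $r^{2-n}\|v\|_{H^1(B(y,\rho r))}^2$ on the left-hand side of the interpolation and above by $r^{-n}\|v\|_{H^1(B(y,\rho r))}^2$ on the right-hand side. Substituting these into the unit-scale bound gives $r^2\|v\|_{H^1(B(y,2r))}^2\le C\|v\|_{H^1(B(y,r))}^{2s}\|v\|_{H^1(B(y,3r))}^{2(1-s)}$, i.e.\ the claim. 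The factor $r$ on the left-hand side of the three spheres inequality is thus precisely the mismatch between the $L^2$ and $\dot H^1$ scalings.

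\textbf{Main obstacle.} The crux is the cutoff/weight choice ensuring strict separation of the three exponential factors $e^{2\tau\varphi(1/2)}$, $e^{2\tau\varphi(2)}$, $e^{2\tau\varphi(9/4)}$. A naive $\chi\equiv1$ on exactly $B(0,2)\setminus B(0,1)$ would force $\max_O\varphi=\min_A\varphi$ by continuity of the radial weight, collapsing the interpolation; this is why I push the outer-transition annulus strictly beyond $|\xi|=2$, working with radii $1/2,\,3/4,\,9/4,\,11/4$ rather than the naive $1/2,\,1,\,2,\,3$. Uniformity of the Carleman constants in $r$ — ensured by the rescaling step together with Remark~\ref{rem1}(ii) — and the trivial case $\tau<\tau_0$ are the remaining bookkeeping items.
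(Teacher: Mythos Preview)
Your proof is correct and follows essentially the same route as the paper's: rescale to unit size so that Remark~\ref{rem1}(ii) gives Carleman constants independent of $r$, apply the Carleman inequality to a cutoff of the solution supported in an annulus, exploit the strict monotonicity of the radial weight to separate the inner and outer commutator contributions, optimise in $\tau$ (with the trivial case $\tau<\tau_0$ handled separately), and then undo the scaling to recover the factor $r$. The only cosmetic differences are that the paper uses the weight $\psi(x)=-|x|^2$ and the cutoff plateau $\{1\le|x|\le 5/2\}$ (with outer transition $\{5/2\le|x|\le 3\}$), whereas you use $\psi(\xi)=-\log|\xi|$ and the plateau $\{3/4\le|\xi|\le 9/4\}$; also, the paper absorbs the inner ball directly into the LHS when passing from the annulus to $B(2)$, while you do this explicitly via $\|\tilde v\|_{H^1(B(0,2))}\le\|\tilde v\|_{H^1(B(0,1))}+\|\tilde v\|_{H^1(A)}$. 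One small point: your unwinding step assumes $r\le 1$, but the hypothesis only gives $r<\tfrac13\operatorname{dist}(y,\Gamma)\le\tfrac13\operatorname{diam}(\Omega)$; the paper handles this by writing the scaling constants as $c_0=\min(1,r_0)$, $c_1=\max(1,r_0)$ with $r_0=\tfrac13\operatorname{diam}(\Omega)$, which you should do as well.
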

\begin{proof}
Let $v\in H^1(\Omega )$ satisfying $\widetilde{L}v=0$, set $B(i)=B(0,i)$, $i=1,2,3$ and $r_0=\frac{1}{3}\mbox{diam}(\Omega )$. Fix $y\in \Omega$ and  $0<r< \frac{1}{3}\mbox{dist}(y,\Gamma ) (\leq r_0)$.  Let
\[
w(x)=v(rx+y),\; x\in B(3),
\]
Clearly, $\widetilde{L}_r w=0$, with an operator $\widetilde{L}_r$ as in ii) of Remark \ref{rem1}. 

\smallskip
Let $\chi \in C_c^\infty (U)$ satisfying $0\leq \chi \leq 1$ and  $\chi =1$ in $K$, with
\[
U=\{x\in \mathbb{R}^n;\; 1/2<|x|<3\},\quad K=\{x\in \mathbb{R}^n;\; 1\leq |x|\leq 5/2\}.
\]
We get by applying \eqref{a10} to $\chi w$, with $U$ in place of $\Omega$, $\lambda \geq \lambda _0$ and $\tau \geq \tau _0$,
\begin{equation}\label{a11}
C\int_{B(2)\setminus B(1)} \left (\lambda ^4\tau ^3\varphi ^3w^2+\lambda ^2\tau \varphi |\nabla w|^2 \right)e^{2\tau \varphi} dx \leq \int_{B(3)} (\widetilde{L}_r(\chi w))^2e^{2\tau \varphi}dx
\end{equation}
From $\widetilde{L}_r w=0$ and the properties of $\chi$, we obtain in a straightforward manner that 
\[
\mbox{supp}(\widetilde{L}_r(\chi w))\subset \{1/2 \leq |x|\leq 1\}\cup \{5/2\leq |x|\leq 3\}
\]
and
\[
(\widetilde{L}_r(\chi w))^2 \leq \Lambda (w^2+|\nabla w|^2 ),
\]
where $\Lambda =\Lambda (r_0)$ is independent on $r$. Therefore, fixing $\lambda$ and shortening $\tau _0$ if necessary, \eqref{a11} implies, for $\tau \geq \tau _0$,
\begin{equation}\label{a12}
C\int_{B(2)} \left (w^2+|\nabla w|^2 \right)e^{2\tau \varphi} dx\leq \int_{B(1)} \left (w^2+|\nabla w|^2 \right)e^{2\tau \varphi} dx+\int_{\{5/2\leq |x|\leq 3\}} \left (w^2+|\nabla w|^2 \right)e^{2\tau \varphi} dx.
\end{equation}

Let us now specify $\varphi$. The choice of $\varphi (x)=-|x|^2$ (which is with no critical point in $U$) in \eqref{a12} gives, for $\tau \geq \tau _0$,
\begin{equation}\label{a13}
C\int_{B(2)} \left (w^2+|\nabla w|^2 \right) dx \leq e^{\alpha \tau}\int_{B(1)} \left(w^2+|\nabla w|^2 \right) dx+e^{-\beta \tau}\int_{B(3)} \left (w^2+|\nabla w|^2 \right)dx,
\end{equation}
where
\begin{align*}
&\alpha =\left(1-e^{-2\lambda}\right)
\\
&\beta =2\left(e^{-2\lambda}-e^{-\frac{5}{2}\lambda}\right) .
\end{align*}
 
 We introduce the following temporary notations
 \begin{align*}
 &P=\int_{B(1)} \left(w^2+|\nabla w|^2 \right) dx,
 \\
 &Q=C\int_{B(2)} \left (w^2+|\nabla w|^2 \right) dx,
  \\
 &R=\int_{B(3)} \left (w^2+|\nabla w|^2 \right)dx.
 \end{align*}
Then \eqref{a13} reads
\begin{equation}\label{a14}
Q\leq e^{\alpha \tau}P +e^{-\beta \tau}R,\;\; \tau \geq \tau _0.
\end{equation}
Let 
\[
\tau _1=\frac{\ln (R/P)}{\alpha +\beta}.
\]
If $\tau _1\geq \tau _0$, then $\tau =\tau _1$ in \eqref{a14} yields
\begin{equation}\label{a15}
Q\leq P^{\frac{\alpha}{\alpha +\beta}}R^{\frac{\beta}{\alpha +\beta}}.
\end{equation}
If $\tau _1<\tau _0$, $R<e^{(\alpha +\beta )\tau _0}P$ and then
\begin{equation}\label{a16}
Q\leq R=R^{\frac{\alpha}{\alpha +\beta}}R^{\frac{\beta}{\alpha +\beta}}\leq e^{\alpha \tau _0}P^{\frac{\alpha}{\alpha +\beta}}R^{\frac{\beta}{\alpha +\beta}}.
\end{equation}

Summing up, we get that one inequalities \eqref{a15} and \eqref{a16} holds. That is, we have in terms of our original notations
\[
\|w\|_{H^1(B(2))}\leq C\|w\|_{H^1(B(1))}^s\|w\|_{H^1(B(3))}^{1-s}.
\]
We complete the proof by noting that 
\begin{equation}\label{a17}
c_0 r^{1-n/2}\|v\|_{H^1(B(y,ir))}\leq \|v\|_{H^1(B(y,ir))}\leq c_1 r^{-n/2}\|v\|_{H^1(B(y,ir))},\;\; i=1,2,3,
\end{equation}
where
\[
c_0 =\min (1,r_0), \quad c_1=\max (1,r_0).
\]
Finally, we observe that estimate \eqref{a17} can be obtained in a simple way after making a change of variable.
\end{proof}


\end{document}